\newcommand{\margnote}[1]{
\ifthenelse{\boolean{shownotes}}%
{\marginpar{\raggedright\tiny\texttt{#1}}}%
{}%
}
\newcommand{\hole}[1]{
\ifthenelse{\boolean{shownotes}}%
{\begin{center} \fbox{ \rule {.25cm}{0cm}
\rule[-.1cm]{0cm}{.4cm} \parbox{.85\textwidth}{\begin{center}
\texttt{#1}\end{center}} \rule {.25cm}{0cm}}\end{center}}
{}
}
\newtheorem{lemma}{Lemma}[section]
\newtheorem{theorem}[lemma]{Theorem}
\theoremstyle{definition}
\newtheorem{definition}[lemma]{Definition}
\theoremstyle{definition}
\newtheorem{remark}[lemma]{Remark}
\theoremstyle{definition}
\global\let\AddToReset=\@addtoreset}
\newcommand{\veps}{{\varepsilon }}
\newcommand{\rR}{\mathbb R}
\newcommand{\rT}{\mathbb T}
\newcommand{\eps}{\varepsilon}
\newcommand{\dx}{\mathop{{\rm div}_{x}}}
\newcommand{\brho}{\bar \rho}
\newcommand{\barm}{\bar m}
\renewcommand{\div}{\operatorname{div}_x}
\newcommand{\del}{\partial}
\title[Weak-Strong Uniqueness for Euler-Korteweg]
{Stability properties of the Euler-Korteweg system \\
with nonmonotone pressures}
 \author{Jan Giesselmann} 
 \address[Jan Giesselmann]{\newline
  Institute of Applied Analysis and Numerical Simulation\newline
 University of Stuttgart\newline
 Pfaffenwaldring 57\newline
 D-70563 Stuttgart\newline
   Germany} 
 \curraddr{}
 \email{{jan.giesselmann@mathematik.uni-stuttgart.de}} 
 \thanks{
JG thanks the Baden-W{\"u}rttemberg foundation for support via the project 'Numerical Methods 
for Multiphase Flows with Strongly Varying Mach Numbers'. }
\author{Athanasios E. Tzavaras}
\address[Athanasios E. Tzavaras]{\newline 
Computer, Electrical, Mathematical Sciences \& Engineering Division
\newline
King Abdullah University of Science and Technology (KAUST)
\newline
Thuwal,  Saudi Arabia
}
\email{athanasios.tzavaras@kaust.edu.sa}
\begin{document}
\maketitle

\begin{center}
{\sf To Peter Markowich with friendship and admiration}
\end{center}

\begin{abstract}
We establish a relative energy framework for the Euler-Korteweg system with non-convex energy.
This allows us to prove weak-strong uniqueness  and to show convergence to a Cahn-Hilliard system in the large friction limit.
We also use relative energy to show that solutions of Euler-Korteweg with convex energy converge to solutions of the Euler system 
in the vanishing capillarity limit, as long as the latter admits sufficiently regular strong solutions.
\end{abstract}

\section{Introduction}
The isothermal Euler-Korteweg (EK) system is a well-known model for the description of  liquid-vapor flows. It 
contains as a special case the system of quantum hydrodynamics obtained
by applying the Madelung transform to the Schroedinger equation\cite{GM97}. It consists of the compressible Euler equations augmented 
to contain dispersive terms modeling capillarity.
It goes back to the 19th century but was derived using modern thermodynamic methods in \cite{DS85}.
For a review on its analytical and numerical treatment see \cite{BDDJ05,B10}.
Well-posedness and stability results for (local in time) smooth solutions can be found in \cite{BDD07}, 
and are valid for a large class of (capillarity) constitutive functions, and pressure laws.
By contrast, the issue of existence of global weak solutions is widely open at present.

We will consider the model in the form:
\begin{equation} \label{eq:EK}
    \begin{split}
	\rho_{t} +\dx (\rho u) &=0  \\
    (\rho u)_{t} + \dx (\rho u \otimes u)
	&=  - \rho \nabla_x \Big( h'(\rho) + \frac{\kappa'(\rho)}{2} |\nabla_x \rho|^2  -\dx (\kappa(\rho) \nabla_x \rho)  \Big )\, ,  \\
    \end{split}
\end{equation}
where $\rho \geq 0$ is the density, $u\in \mathbb{R}^d$ the velocity, $m=\rho u$  the momentum, 
$h=h(\rho)$ is the energy density, and
$\kappa=\kappa(\rho)>0$ is the coefficient of capillarity.

Note that \eqref{eq:EK} can also be written in conservative form as 
\begin{equation} \label{eq:EKcon}
    \begin{split}
	\rho_{t} +\dx (\rho u) &=0  \\
    (\rho u)_{t} + \dx (\rho u \otimes u)
	&=  \dx S\, ,  \\
    \end{split}
\end{equation}
where $S$ is the Korteweg stress tensor,
\begin{equation}\label{stress}
 S:= \Big[-p(\rho)  - \frac{\rho  \kappa'(\rho)+ \kappa(\rho)}{2}|\nabla_x \rho|^2  + \dx ( \rho \kappa(\rho) \nabla_x \rho) \Big ]  \mathbb{I} - \kappa(\rho)\nabla_x \rho \otimes \nabla_x \rho
\end{equation}
 $\mathbb{I}$ denotes the identity matrix in $\mathbb{R}^{d\times d}$ and the (local) pressure is defined as
\begin{equation}
 p(\rho)= \rho h'(\rho) - h(\rho) \, , \quad p'(\rho) = \rho h''(\rho) \, .
\end{equation}
Strong solutions of \eqref{eq:EK} satisfy the balance of total (internal and kinetic) energy,
\begin{multline} \label{eq:energy}
\partial_t \left  (\frac{1}{2} \frac{|m|^2}{\rho} + h(\rho) + \frac{\kappa(\rho)}{2} |\nabla_x \rho|^2 \right )  \\
  + \dx  \left ( \frac{1}{2}m\frac{|m|^{2}}{\rho^{2}} + 
     m\Big( h'(\rho)+ \frac{\kappa'(\rho)}{2} |\nabla_x \rho|^2 - \dx (\kappa(\rho) \nabla_x \rho) \Big)    + \kappa(\rho) \nabla_x\rho\dx m 
          \right )=  0.
\end{multline}

The purpose of this paper is to study some consequences of the relative energy framework 
for the Euler-Korteweg system presented in \cite{GLT} and to develop extensions for fluids with non-monotone pressures.
In \cite{GLT} a relative energy framework for general fluid dynamics systems arising from a Hamiltonian structure is developed.
The framework is conceptually related to the relative entropy computations of Dafermos and DiPerna \cite{Daf79,Daf79b,Dip79}
well known in the theory of hyperbolic conservation laws. However, the former emerges from a variational structure while
the latter emerges from the Clausius-Duhem inequality of
thermomechanical  systems.

One key goal is to extend the stability implications of relative energy to Euler-Korteweg systems
with non-convex (local) energies.
We will achieve that in the case of constant capillarity, which is a typical setting when \eqref{eq:EK} is used to model liquid-vapor flows.
In the study of liquid-vapor flows the energy is convex on the majority of the state space and there is only a small interval, 
usually called the elliptic region, in which it is concave.
Thus, we restrict to energies obeying the structure:
\begin{equation}
\label{decomp_h} 
h(\rho) = h_\gamma(\rho) + e(\rho) := c \rho^\gamma + e(\rho)
\end{equation}
with $c>0$, $\gamma > 1$ and $e(\rho) \in C_c^\infty(0,\infty)$ smooth and compactly supported.
In \eqref{decomp_h} the (local) internal energy consists of a $\gamma$-law part $h_\gamma$ and 
another (localized in state space) part  $e$ containing the non-convexity.

The extension of the relative energy framework to non-convex energies suggest to consider the issues of  weak-strong uniqueness 
and of large friction limits to systems admiting non-convex energies.
The question of weak-strong uniqueness for the Euler-Korteweg system was addressed in \cite{DFM15,GLT}  for fluids with monotone pressure laws.
Such results were not known to apply to weak solutions for systems of liquid-vapor flows.
The present note closes this gap, i.e., we show weak-strong uniqueness for the Euler-Korteweg model with non-convex energy.

This modified relative energy framework for non-convex energies also allows to carry out the  large friction limit 
from the Euler-Korteweg system  to the Cahn-Hilliard equation again for non-convex local energy.
For convex energies, such limits were established in \cite{LT16} following a general strategy for convergence from Hamiltonian systems 
with friction to gradient flows.

Our second objective is to study the vanishing capillarity limit of \eqref{eq:EK} in the smooth regime,
restricting  now to convex energy densities. 
We show that solutions to \eqref{eq:EK} converge as $\kappa(\rho) \rightarrow 0$ to solutions of the 
associated compressible  Euler system (with $\kappa \equiv 0$) as long as the  limiting Euler system admits smooth solutions. 
We are not aware of such a general convergence result in the smooth regime, and the proof is a remarkably simple consequence
of the relative energy framework.

Beyond the smooth regime the effect of dispersion is known
(at the level of integrable systems) to induce oscillations and the behavior in the small capillarity limit of shocks is not at all understood. 
On the other hand,  the inclusion of viscosity has a stabilizing effect, but again the zero viscosity-capillarity limit is a very subtle process.
The reader is referred to \cite{CH13, GL15} for existence results on the Navier-Stokes Korteweg systems, and
to \cite{BL03} for the intricacies of its use as a selection criterion to the issue of shock admissibility.

The outline of the rest of the paper is as follows: In section \ref{sec:sre} we recall notions of dissipative and conservative weak solutions as well as relative energy computations for the Euler-Korteweg system.
Section \ref{sec:wsnce} deals with removing the terms from the relative energy which are related to $e$. This allows us 
to prove weak-strong uniqueness.
This version of relative energy is used for studying the large friction limit in section \ref{sec:lf}.
Section \ref{sec:vcl} is devoted to establishing the vanishing capillarity limit for convex energies.

\section{Relative Energy}\label{sec:sre}
To keep this paper self contained, we recall notions of weak solutions for \eqref{eq:EK} and standard relative energy arguments in this section following the exposition in \cite{GLT}.
For simplicity, we focus on periodic solutions, defined in $\rT^d$  the $d$-dimensional flat torus.
Extending our results to finite domains with Dirichlet or Neumann boundary conditions is not straightforward. 
 Many of the computations performed here can be carried out for solutions in $\mathbb{R}^d$.
However,  to obtain similar results one would be restricted to adiabatic coefficients $\gamma \geq 2$,  since the estimates 
in the range $1 < \gamma < 2$ make frequent use of Poincar{\'e}'s inequality.

\medskip

We recall:

\begin{definition} \label{def:wksol}
(i) A function $( \rho,  m)$ with  $\rho \in C([0, \infty) ; L^1 ( \rT^d ) )$, $m \in C  \big (  [0, \infty) ;   L^1(\rT^d, \rR^d)  \big )$,
$\rho \ge 0$,  is a weak
solution of \eqref{eq:EK}, if $\frac{m \otimes m}{\rho}$, $S \in L^1_{loc}  \left (  (0, \infty) \times \rT^d ) \right )^{d \times d}$,  and $(\rho, m)$ satisfy
\begin{equation}\label{eq:wksol}
\begin{aligned}
- \iint \rho \psi_t + m \cdot \nabla_x \psi dx d t  = \int \rho (0 , x) \psi (0 , x) dx \, ,  \qquad  \forall \psi \in C^1_c \left ( [0, \infty) ; C^1 (\rT^d) \right )\, ;
\\
- \iint m \cdot \varphi_t + \frac{m \otimes m}{\rho} : \nabla_x \varphi - S : \nabla_x \varphi \,  dx dt = \int m(0,x) \cdot \varphi(0,x) dx  \, ,  \qquad \qquad 
\\
 \forall  \ \varphi  \in C^1_c \left ( [0, \infty) ;  \big ( C^1 (\rT^d)  \big )^d \right ) \, .
\end{aligned}
\end{equation}

\medskip\noindent
(ii) If,  in addition, $\frac{1}{2} \frac{|m|^2}{\rho} + h(\rho) + \frac{\kappa(\rho)}{2} |\nabla_x \rho|^2 \in C([0, \infty) ; L^1 ( \rT^d ) )$ and it satisfies
\begin{equation}
 \label{eq:disws}
 \begin{aligned}
  - \iint \Big ( \frac{1}{2} \frac{|m |^2}{\rho} + h(\rho) &+ \frac{\kappa(\rho)}{2} |\nabla_x \rho|^2  \Big ) \dot\theta(t) \, dx dt
\\
 &\le   \int \left ( \frac{1}{2} \frac{|m |^2}{\rho } + h(\rho) + \frac{\kappa(\rho)}{2} |\nabla_x \rho|^2  \right ) \Big |_{t=0}  \theta(0)dx \, ,
\\
& \forall \ \theta  \in W^{1, \infty} [0, \infty) \, , \theta \ge 0 \, , \ \mbox{compactly supported on $[0, \infty)$},
 \end{aligned}
\end{equation}
then $(\rho, m)$ is called a dissipative weak solution. 

 \medskip\noindent
(iii) By contrast,
if $\frac{1}{2} \frac{|m|^2}{\rho} +h(\rho) + \frac{\kappa(\rho)}{2} |\nabla_x \rho|^2  \in C([0, \infty) ; L^1 ( \rT^d ) )$ and it satisfies \eqref{eq:disws}
as an equality, then $(\rho, m)$  is called a conservative weak solution.
\end{definition}

Depending on how the solutions arise one might be inclined to use either  \emph{conservative} or \emph{dissipative} weak solutions.
The appropriate notion  depends on how solutions emerge:  if they emerge as vanishing viscosity
limits of models in fluid mechanics then the use of dissipative solutions is advisable; by contrast, if they arise 
from the Schroedinger equation as in the case of the QHD system the use of conservative solutions might be more appropriate.
In any case, our analysis covers both eventualities.

In the sequel we consider weak solutions with finite mass and finite energy, i.e., we place the assumption
\begin{itemize}
\item[\textbf{(H)}] $( \rho,  m)$ is  a dissipative (or conservative) weak periodic solution of \eqref{eq:EK} with $\rho \ge 0$
in the sense of Definition \ref{def:wksol}, and 
\begin{align}
\sup_{t\in (0,\infty)} \int_{\rT^d}  \rho \, dx &\le K_1 < \infty\, ,
\label{hypCauchyK1}
\\
 \sup_{t\in (0,\infty)} \int_{\rT^d}
 \frac{1}{2} \frac{|  m |^2}{ \rho} + h(\rho) + \frac{\kappa(\rho)}{2} |\nabla_x \rho|^2  \, dx
 &\le K_2 < \infty \, .
 \label{hypCauchyK2}
\end{align}
\end{itemize}

\begin{definition}[Strong Solution]
 We call $(\rho,\rho u)$ a strong solution of \eqref{eq:EK} on $[0,T) \times \rT^d$ provided
 \begin{align}
  \rho \in C^0([0,T), C^3(\rT^d)) \cap C^1([0,T), C^1(\rT^d))\\
  u \in C^0([0,T), C^2(\rT^d,\rR^d)) \cap C^1([0,T), C^0(\rT^d,\rR^d))
 \end{align}
and \eqref{eq:EK} is satisfied in a point-wise sense.
\end{definition}
To deal with the capillary part of the energy we define
\begin{equation}
 F(\rho,q) := \frac{\kappa(\rho)}{2} |q|^2 \quad \text{ for  any } \rho>0, q \in \mathbb{R}^d.
\end{equation}

Following Dafermos and DiPerna \cite{Daf79, Dip79} we define relative quantities comparing two different states.
They are given by the value at one state minus the first order Taylor expansion around the other state.
The relative potential energy consists of two parts:
\begin{equation}\label{def:rel}
 \begin{split}
  h(\rho  | \, \bar \rho ) &:= h(\rho) - h(\brho) - h'(\brho) (\rho - \brho),\\
  F(\rho, q| \brho , \bar q) &:= F(\rho, q) - F( \brho , \bar q) - \frac{\partial F}{\partial \rho}(\brho, \bar q) (\rho - \brho)
  - \frac{\partial F}{\partial q}(\brho, \bar q) (q - \bar q).
 \end{split}
\end{equation}
For the  kinetic energy $K(\rho ,m) = \tfrac{1}{2} \tfrac{|m|^2}{\rho}$ we proceed analogous to $F$ and 
\begin{equation}
 K(\rho, m | \brho, \bar m) =  \frac{1}{2} \rho \Big | \frac{m}{\rho} -  \frac{\bar m}{\bar\rho}\Big | ^2,
\end{equation}
see \cite{GLT} for details.

For the energy density at hand \cite[Theorem 3.2]{GLT} implies that for any strong solution $(\brho,\ \barm)$ of \eqref{eq:EK}
and any weak (dissipative or conservative) solution $(\rho,m)$ of \eqref{eq:EK} the following inequality is fulfilled for almost all $t \in [0,T):$
\begin{equation}
 \label{eq:RelEnKorGenFinalweak}
\begin{split}
& \int_{\rT^d} \left. \Big ( \frac{1}{2} \rho \Big | \frac{m}{\rho} -  \frac{\bar m}{\bar\rho}\Big | ^2 +  h(\rho  | \, \bar \rho \right. )+ F(\rho,\nabla_x \rho| \brho , \nabla_x \brho) \Big )dx \Big |_t\\
&\leq
 \int_{\rT^d} \left. \left ( \frac{1}{2} \rho \Big | \frac{m}{\rho} -  \frac{\bar m}{\bar\rho}\Big | ^2 +  h(\rho  | \, \bar \rho \right. )+ F(\rho,\nabla_x \rho| \brho , \nabla_x \brho) \right )dx \Big |_0
\\
 &\  - \int \!\!\int_{[0,t)\times\rT^d} \left [\rho \left( \frac{m}{\rho}-   \frac{\bar m}{\bar\rho}\right )\otimes \left( \frac{m}{\rho}-   \frac{\bar m}{\bar\rho}\right ) :\nabla_x  \left(    \frac{\bar m}{\bar\rho}\right )\right] \, ds dx
 \\
  &\  - \int \!\!\int_{[0,t)\times\rT^d} 
  \left[\dx \left (\frac{\bar m}{\bar\rho}\right ) \Big(  s(\rho, \nabla_x\rho\left | \bar \rho, \nabla_x\bar \rho \right. ) + p(\rho|\bar \rho)   \Big)  \right]dsdx
\\
 &\ 
 - \int \!\!\int_{[0,t)\times\rT^d} \left [ \nabla_x \left (\frac{\bar m}{\bar\rho} \right) : H(\rho, \nabla_x\rho\left | \bar \rho, \nabla_x\bar \rho \right. ) 
 + \nabla_x \dx \left (\frac{\bar m}{\bar\rho}\right ) \cdot r(\rho, \nabla_x\rho\left | \bar \rho, \nabla_x\bar \rho \right. )\right]dsdx\, ,
\end{split}
\end{equation}
where 
$ s(\rho , q,| \brho , \bar q) $, $r(\rho , q,| \brho , \bar q) $ and $H(\rho , q,| \brho , \bar q) $
are defined  in a way analogous to \eqref{def:rel}, based on the functions
\begin{equation}\label{eq:rsH}
 s(\rho , q)  =  \tfrac{1}{2} \big (\kappa(\rho)+ \rho \kappa'(\rho) \big ) | q |^2  \, , \quad
 r(\rho , q)  = \rho \kappa(\rho) q \, ,  \quad
  H(\rho , q) =  \kappa(\rho) q\otimes q  \, ,
\end{equation}
that appear in the Korteweg stress tensor  \eqref{stress}.

\section{Weak-strong uniqueness for non-convex energies}\label{sec:wsnce}
In this section we consider  energies possessing a decomposition of the form \eqref{decomp_h} and constant capillarities $\kappa(\rho)=C_\kappa >0.$ 
We will see that the gradient terms in the energy  (which express nonlocal contributions to the energy) allow to compensate for the non-convexity 
of the local part of the energy density. The energy estimates dictate that  a weak solution $(\rho, m)$ has the regularity:
\begin{equation}
\label{bounds}
\sup_{ t \in (0, \infty)} \int_{\rT^d} \tfrac{1}{2} \frac{ |m|^2}{\rho} + c \rho^\gamma + \tfrac{C_\kappa}{2} | \nabla \rho |^2 \, dx < \infty \, , \quad 
\end{equation}
for $\gamma > 1$. The decomposition \eqref{decomp_h} implies
\[ h(\rho  | \, \bar \rho ) =  h_\gamma(\rho  | \, \bar \rho ) +  e(\rho  | \, \bar \rho ),\]
and it will be our goal  to remove the terms containing $e$ from the left hand side of \eqref{eq:RelEnKorGenFinalweak}.
In particular, we aim at providing an upper  bound for 
\[   
 \int_{\rT^d} e(\rho  | \, \bar \rho  ) dx \Big |_0 - \int_{\rT^d} e(\rho  | \, \bar \rho  )  dx \Big |_t.\]
 In \cite{Gie14,GLT} this was done for two strong solutions  by estimating $\partial_t e(\rho| \brho).$
 However, here we only assume that $\rho$ is a weak solution, so $\partial_t e(\rho| \brho)$ is not well defined.
 To overcome this technical issue we derive the corresponding estimates, on the level of a smooth approximating sequence $(\rho^n, m^n)$
 and then go to the limit on both sides of the estimate.
 
 To this end, we first extend $(\rho, m)$ to negative times by setting
\begin{equation}
 \rho(t,x)= \rho_0(x),\qquad m(t,x) = 0 \quad \text{ for } t< 0.
\end{equation}
Note that these extended functions weakly solve 
\begin{equation}
 \partial_t \rho + \dx m =0 \quad \text{ in } (-\infty, \infty) \times \rT^d.
\end{equation}
Let $\phi \in C_c^\infty(\rR,[0,\infty))$ with $\operatorname{supp}(\phi)\subset [0,1]$ and $\int_{\rR} \phi=1,$
then we define
\begin{equation}\label{def:mol}
 \begin{split}
  \phi^n(x)&:= n \phi(nx),\\
  \rho^n(t,x)&:= \int_{\rR} \int_{\rT^d} \phi^n(t-s) \phi^n(x-y) \rho(s,y) \, dy ds,\\
  m^n(t,x)&:= \int_{\rR} \int_{\rT^d} \phi^n(t-s) \phi^n(x-y) m(s,y) \, dy ds,
 \end{split}
\end{equation}
where for $x \in \rT^d$ we understand 
\[ \phi^n(x) = \prod_{i=1}^d \phi^n(x_i).\]

By construction $\rho^n \in C^\infty ((-\infty,\infty) \times \rT^d,[0,\infty))$,  $m^n \in C^\infty ((-\infty,\infty) \times \rT^d, \rR^d)$ and $\rho^n(0,\cdot) \longrightarrow \rho_0$ in $H^1(\rT^d)$.
Moreover, for any test function $ \psi \in C_c^\infty ((-\infty,\infty) \times \rT^d)$
we define
\begin{equation}
 \psi^n(s,y):= \int_{\rR} \int_{\rT^d} \phi^n(t-s) \phi^n(x-y) \psi(t,x) \, dx dt\, .
\end{equation}
Then, we have
\begin{equation}\label{eq:35}\begin{split}
 &\int \!\!\!\int \rho^n \partial_t \psi + m^n \cdot \nabla_x \psi dx dt
\\
 &=
  \int  \!\!\!\int \!\!\! \int \!\!\! \int \phi^n(t-s) \phi^n(x-y) \big( \rho(s,y) \partial_t \psi (t,x) + m(s,y) \cdot \nabla_x \psi(t,x)\big) dx dy ds dt
\\
 &=   \int \!\!\! \int  \!\!\!\int \!\!\! \int \partial_s  \big ( \phi^n(t-s) \big )  \phi^n(x-y)  \rho(s,y)  \psi (t,x) 
\\
 & \qquad \qquad \qquad \qquad \qquad + \phi^n(t-s) \nabla_y \big ( \phi^n(x-y) \big ) \cdot  m(s,y) \psi(t,x) \,  dx dy ds dt
\\
 &= \int \!\!\! \int \rho(s,y) \partial_s \psi^n(s,y) + m(s,y) \cdot  \nabla_y \psi^n(s,y) ds dy=0.
\end{split}\end{equation}
Equation \eqref{eq:35} implies (due to the regularity of $\rho^n,m^n$) that for all $n \in \mathbb{N}$ the following equation holds in a point-wise sense
\begin{equation}\label{eq:regmc}
 \partial_t \rho^n + \dx m^n =0 \quad \text{ in } (-\infty, \infty) \times \rT^d.
\end{equation}

Let $(\rho^n,m^n)$ and  $(\brho,\barm)$ solve \eqref{eq:EK}$_1$ point-wise, then we obtain
 \begin{align}&\partial_t e(\rho^n| \brho)   \\
 &= \partial_t \big(e(\rho^n) - e(\brho) - e'(\brho)(\rho^n - \brho) \big)\nonumber \\
& = e'(\rho^n) \partial_t \rho^n - e''(\brho) \partial_t \brho (\rho^n - \brho) - e'(\brho)\partial_t \rho^n\nonumber \\
& = - \big(e'(\rho^n) - e'(\brho)\big) \div(m^n) + \div(\bar m)  e''(\brho) (\rho^n - \brho)\nonumber  \\
 & =-\div (\barm) \big( e'(\rho^n) - e'(\brho) - e''(\brho)(\rho^n-\brho) \big)
  - \big( e'(\rho^n) - e'(\brho)\big) \big( \div(m^n) - \div( \barm) \big)\nonumber 
  \end{align}
 and thus
  \begin{multline}\label{eq:erelreg}
   \int_{\rT^d} e(\rho^n| \brho) \, dx \big|_t - \int_{\rT^d} e(\rho^n| \brho) \, dx \big|_0
   \\
   =- \int_0^t \! \int_{\rT^d} \div ( \barm) \big( e'(\rho^n) - e'(\brho) - e''(\brho)(\rho^n-\rho) \big) \, dx ds
   \\ +
   \int_0^t \! \int_{\rT^d} \big( e''(\rho^n)\nabla_x \rho^n - e''(\brho)\nabla_x \brho\big) \cdot \big( m^n - \barm \big)\, dx ds .
  \end{multline}

From \eqref{bounds} we see that $\rho \in L^\gamma ((0,T)\times \rT^d)$, $\nabla \rho \in L^2 ((0,T)\times \rT^d))$ and $m \in L^1 ((0,T)\times \rT^d)$.
Standard properties of convolution dictate 
\begin{equation}\label{eq:conv3}
\begin{split}
 \rho^n  &\longrightarrow \rho \quad \text{ in } L^\gamma ((0,T)\times \rT^d),
\\
  \nabla \rho^n  &\longrightarrow \nabla \rho \quad \text{ in } L^2 ((0,T)\times \rT^d),
\\
 m^n  &\longrightarrow m \quad \text{ in }  L^1((0,T)\times \rT^d) \, ,
\end{split}
\end{equation}
and along a subsequence (if necessary) 
\begin{equation}\label{eq:conv1}
\begin{split}
 (\rho^n, m^n) &\longrightarrow (\rho,m) \quad \text{ a.e. in } (0,t) \times \rT^d \, .
 \\
 \nabla \rho^n  &\longrightarrow \nabla \rho  \qquad \text{ a.e. in }  (0,t) \times \rT^d,
 \end{split}
\end{equation}

\begin{lemma}\label{lem:erel}
 Let $(\rho , m)$ and $(\brho,\barm)$ be a weak and a strong solution of \eqref{eq:EK} respectively, then,
   \begin{multline}\label{eq:erel}
   \int_{\rT^d} e(\rho| \brho) \, dx \big|_t - \int_{\rT^d} e(\rho| \brho) \, dx \big|_0
   \\
   =  - \int_0^t \! \int_{\rT^d} \div ( \barm) \big( e'(\rho) - e'(\brho) - e''(\brho)(\rho-\brho) \big) \, dx ds
   \\ 
  +  \int_0^t \! \int_{\rT^d} \big( e''(\rho)\nabla_x \rho - e''(\brho)\nabla_x \brho\big) \big( m - \barm \big)\, dx ds .
  \end{multline}
\end{lemma}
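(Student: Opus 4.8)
The plan is to pass to the limit $n \to \infty$ in the identity \eqref{eq:erelreg}, which has already been established for the smooth approximating sequence $(\rho^n, m^n)$, and to show that each of the four integral terms converges to its natural counterpart in \eqref{eq:erel}. The crucial structural fact is that $e \in C_c^\infty(0,\infty)$, so $e$, $e'$, $e''$, and $e'''$ are all globally bounded and vanish for $\rho$ near $0$ and near $\infty$. In particular, $e(\rho)$, $e'(\rho)$, $e''(\rho)$ are bounded by constants uniformly in $\rho \ge 0$, and the same holds along the sequence $\rho^n$. Combined with the a.e.\ convergence \eqref{eq:conv1} of $(\rho^n, \nabla\rho^n, m^n)$ and the boundedness hypotheses \eqref{bounds}, this makes all the passages to the limit a matter of dominated convergence.

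First I would treat the left-hand side: by \eqref{eq:conv1}, $e(\rho^n | \brho) \to e(\rho | \brho)$ a.e.\ on $\rT^d$ at the fixed times $0$ and $t$ (here one uses that $\rho^n(0,\cdot) \to \rho_0$ in $H^1$, hence a.e.\ along a subsequence, together with the continuity-in-time of $\brho$); and since $|e(\rho^n|\brho)| = |e(\rho^n) - e(\brho) - e'(\brho)(\rho^n - \brho)| \le C(1 + \rho^n + \brho)$ with $\rho^n$ bounded in $L^\gamma(\rT^d) \hookrightarrow L^1(\rT^d)$ uniformly (a consequence of \eqref{bounds} and the contractivity of convolution on $L^\gamma$), dominated convergence gives the convergence of the two boundary integrals. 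Second, for the first integral on the right of \eqref{eq:erelreg}: $\div(\barm)$ is bounded on $[0,t]\times\rT^d$ (strong solution), and the bracket $e'(\rho^n) - e'(\brho) - e''(\brho)(\rho^n - \brho)$ converges a.e.\ and is dominated by $C(1 + \rho^n + \brho)$, again an equi-integrable family by \eqref{eq:conv3}; so this term converges. (I would note in passing that the $\rho$ versus $\brho$ in the argument of $e''(\brho)(\rho^n - \rho)$ in \eqref{eq:erelreg} is a typo for $\rho^n - \brho$, consistent with the display preceding it.)

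The third, genuinely delicate, term is $\int_0^t\!\int_{\rT^d} (e''(\rho^n)\nabla_x\rho^n - e''(\brho)\nabla_x\brho)\cdot(m^n - \barm)\,dx\,ds$. Here the difficulty is that the integrand is a product of three factors, only one of which ($\nabla_x\brho$, $\barm$, $e''(\brho)$) involves the bounded strong solution; the pairing $e''(\rho^n)\nabla_x\rho^n \cdot m^n$ involves $\nabla_x\rho^n \to \nabla_x\rho$ only in $L^2$ and $m^n \to m$ only in $L^1$, so the naive product does not obviously converge in $L^1$. The fix is to exploit the boundedness and compact support of $e''$: write the term as $\int e''(\rho^n)\,\nabla_x\rho^n\cdot(m^n - \barm) - \int e''(\brho)\,\nabla_x\brho\cdot(m^n-\barm)$. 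The second piece is harmless since $e''(\brho)\nabla_x\brho$ is bounded and $m^n - \barm \to m - \barm$ in $L^1$. For the first piece, I would split $\nabla_x\rho^n\cdot(m^n - \barm) = \nabla_x\rho^n\cdot m^n - \nabla_x\rho^n\cdot\barm$; the part with $\barm$ converges because $\barm$ is bounded and $\nabla_x\rho^n \to \nabla_x\rho$ in $L^2 \subset L^1$ and $e''(\rho^n) \to e''(\rho)$ a.e.\ and is bounded (product of an $L^1$-convergent sequence with a bounded a.e.-convergent one). The remaining part $\int e''(\rho^n)\nabla_x\rho^n\cdot m^n$ is the heart of the matter: I would use that $m^n/\rho^n \to u$ wherever $\rho^n$ stays bounded away from $0$ — more robustly, one notes $e''(\rho^n)\nabla_x\rho^n = \nabla_x\big(E(\rho^n)\big)$ where $E$ is an antiderivative of $e''$ (so $E = e'$ up to a constant), $E'$ bounded with compact support, whence $E(\rho^n)$ is uniformly bounded and $E(\rho^n) \to e'(\rho)$ in, say, $L^p$ for every $p < \infty$ and weak-$\ast$ in $L^\infty$; then integrate by parts in the smooth setting, $\int e''(\rho^n)\nabla_x\rho^n\cdot m^n = -\int (e'(\rho^n) - \text{const})\,\div m^n = \int (e'(\rho^n)-\text{const})\,\partial_t\rho^n$ using \eqref{eq:regmc}, and this last expression is again $\partial_t$ of a bounded quantity whose time-integral telescopes — but since that route reintroduces $\partial_t\rho$, I would instead simply invoke that the product $e''(\rho^n)\nabla_x\rho^n$ converges strongly in $L^2$ (bounded $\times$ $L^2$-strong, via a.e.\ convergence and the generalized dominated convergence / Vitali argument using $|e''(\rho^n)\nabla_x\rho^n|^2 \le C|\nabla_x\rho^n|^2$ and $L^2$-convergence of $\nabla_x\rho^n$ giving equi-integrability of the squares) while $m^n \to m$ in $L^1$; to pair an $L^2$-strongly convergent factor with an $L^1$-convergent factor I use that $e''(\rho^n)\nabla_x\rho^n\cdot m^n \to e''(\rho)\nabla_x\rho\cdot m$ a.e.\ and is dominated, along the relevant subsequence, by $C|\nabla_x\rho^n|\,|m^n|$; dominating that product requires a uniform bound, which is where I would instead keep $\barm$ and $m^n$ together and run Vitali's theorem on the \emph{whole} integrand $(e''(\rho^n)\nabla_x\rho^n - e''(\brho)\nabla_x\brho)\cdot(m^n-\barm)$, checking uniform integrability from $L^2$-convergence of the first factor (hence $L^2$-bounded) and the fact that $m^n - \barm$ is bounded in $L^1$ — a product of an $L^2$-bounded family and an $L^1$-bounded family is not automatically equi-integrable, so the clean statement is really: the first factor converges strongly in $L^2$, the boundary-solution factors are bounded, and thus the problematic product reduces to $e''(\rho^n)\nabla_x\rho^n\cdot m^n$, which one handles by the antiderivative/integration-by-parts identity above, turning it into $\int_0^t\partial_s\big(\int e'(\rho^n)-\langle\text{const}\rangle\big)$ form and passing to the limit using $L^\gamma$-convergence of $\rho^n$ and boundedness of $e'$.

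In short, the approach is: (i) record that $e$ and its derivatives are bounded with compact support; (ii) establish pointwise a.e.\ convergence at times $0,t$ and on $(0,t)\times\rT^d$ from \eqref{eq:conv1}; (iii) dominate the boundary terms and the $\div\barm$-term by the $L^\gamma$-bounds in \eqref{bounds} and apply dominated convergence; (iv) for the gradient–momentum term, use the antiderivative $e' $ of $e''$ to rewrite $e''(\rho^n)\nabla_x\rho^n = \nabla_x e'(\rho^n)$, integrate by parts against $m^n$ using the smooth mass equation \eqref{eq:regmc}, and pass to the limit using $\rho^n \to \rho$ in $L^\gamma$ and boundedness of $e'$, together with the $L^1$-convergence of $m^n$ against the bounded factors $\nabla_x\brho, \barm, e''(\brho)$. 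The main obstacle, as indicated, is precisely the nonlinear triple product $e''(\rho^n)\nabla_x\rho^n\cdot(m^n-\barm)$: one cannot simply multiply the three convergences, and the resolution is to split off the bounded strong-solution factors and then exploit the chain-rule/antiderivative structure of $e''(\rho^n)\nabla_x\rho^n$ so that the remaining passage to the limit needs only the $L^\gamma$-strong convergence of $\rho^n$ and the uniform bounds \eqref{bounds}, not any strong convergence of $m^n$ beyond $L^1$.
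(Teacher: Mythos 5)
Your overall strategy (mollify, derive the identity \eqref{eq:erelreg} for $(\rho^n,m^n)$, pass to the limit term by term) is exactly the paper's, and your treatment of the boundary terms and of the $\div\barm$-term is fine. The gap is in the one term you yourself flag as delicate. After correctly observing that the naive dominations fail ($|\nabla_x\rho^n|\,|m^n|$ is only the product of an $L^2$-bounded and an $L^1$-bounded family, hence not equi-integrable), you fall back on the antiderivative/integration-by-parts route: $\int_0^t\!\int e''(\rho^n)\nabla_x\rho^n\cdot m^n = -\int_0^t\!\int e'(\rho^n)\,\div m^n = \int_0^t\!\int e'(\rho^n)\,\partial_s\rho^n = \int_{\rT^d} e(\rho^n)\,dx\big|_0^t$. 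This quantity does converge, but to $\int_{\rT^d}e(\rho)\,dx\big|_0^t$, \emph{not} to $\int_0^t\!\int e''(\rho)\nabla_x\rho\cdot m$. Identifying those two expressions is a chain-rule (renormalization) statement for the merely weak solution $(\rho,m)$, which is not available for free---it is essentially the content the lemma is meant to deliver. Moreover, the specific pointwise form $\big(e''(\rho)\nabla_x\rho - e''(\brho)\nabla_x\brho\big)(m-\barm)$ is what the weak--strong uniqueness proof later estimates against the relative energy in \eqref{eq:redrel5}--\eqref{eq:redrel7}; the telescoped form $\int e(\rho)\,dx\big|_0^t$ would be useless there. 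So your final argument proves a true but different identity and leaves \eqref{eq:erel} unestablished.

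The missing idea is a generalized dominated convergence argument with an $n$-dependent dominating sequence. Write $e''(\rho^n)\nabla_x\rho^n\cdot m^n = \sqrt{\rho^n}\,e''(\rho^n)\,\nabla_x\rho^n\cdot m^n/\sqrt{\rho^n}$ and use that $\sqrt{\rho}\,e''(\rho)$ is bounded (compact support of $e''$ in $(0,\infty)$), so that $|e''(\rho^n)\nabla_x\rho^n\cdot m^n|\le C\,|\nabla_x\rho^n|\,\sqrt{|m^n|^2/\rho^n}$. Since $(\rho,m)\mapsto|m|^2/\rho$ is jointly convex, Jensen's inequality applied to the mollifier gives $|m^n|^2/\rho^n\le\left(|m|^2/\rho\right)^n$, the mollification of the kinetic-energy density, which converges in $L^1$ because $|m|^2/\rho\in L^1$ by \eqref{hypCauchyK2}. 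Hence the dominating sequence $C\,|\nabla_x\rho^n|\,\sqrt{\left(|m|^2/\rho\right)^n}$ is a product of two strongly $L^2$-convergent sequences and converges in $L^1$, and the variant of the dominated convergence theorem ($f_n\to f$ a.e., $|f_n|\le g_n$, $g_n\to g$ in $L^1$ imply $f_n\to f$ in $L^1$) yields the direct convergence $e''(\rho^n)\nabla_x\rho^n\cdot m^n\to e''(\rho)\nabla_x\rho\cdot m$ in $L^1$, which is what \eqref{eq:erel} requires.
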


\begin{proof}
The mollification $(\rho^n,m^n)$ of $(\rho,m)$, defined in \eqref{def:mol},  satisfies \eqref{eq:regmc} and \eqref{eq:erelreg}.
We study the limit of \eqref{eq:erelreg} as $n \rightarrow \infty$. Since the function $e \in C_c^\infty(0,\infty)$ is compactly supported in $(0, \infty)$,
the function $e(\rho)$ and its derivatives are uniformly bounded. 
As a  straightforward application of \eqref{eq:conv1}, \eqref{eq:conv3} and the dominated convergence theorem, we obtain
\begin{equation}
 \int_{\rT^d} e(\rho^n| \brho) \, dx \big|_t - \int_{\rT^d} e(\rho^n| \brho) \, dx \big|_0 \longrightarrow  \int_{\rT^d} e(\rho| \brho) \, dx \big|_t - \int_{\rT^d} e(\rho| \brho) \, dx \big|_0
\end{equation}
and 
\begin{multline}
   \int_0^t\!\! \int_{\rT^d} \div ( \barm) \big( e'(\rho^n) - e'(\brho) - e''(\brho)(\rho^n-\brho) \big) \, dx ds\\
\longrightarrow   \int_0^t\!\! \int_{\rT^d} \div ( \barm) \big( e'(\rho) - e'(\brho) - e''(\brho)(\rho-\brho) \big) \, dx ds \, .
\end{multline}

The last objective is to show the convergence
   \begin{multline}
   \int_0^t \!\!\! \int_{\rT^d} \big( e''(\rho^n)\nabla_x \rho^n - e''(\brho)\nabla_x \brho\big) \big( m^n - \barm \big)\, dx ds
   \\ 
  \longrightarrow 
   \int_0^t \!\!\! \int_{\rT^d} \big( e''(\rho)\nabla_x \rho- e''(\brho)\nabla_x \brho\big) \big( m - \barm \big)\, dx ds \, .
  \end{multline}
This is split into four terms and the only term whose convergence presents new difficulties is to show
\begin{equation}\label{pr:erel2}
 e''(\rho^n)\nabla_x \rho^n m^n \longrightarrow  e''(\rho)\nabla_x \rho \, m \quad \text{ in }  L^1((0,T)\times \rT^d) \, .
\end{equation}

To prove \eqref{pr:erel2} we will use the following variant of the dominated convergence theorem:
If $\{ f_n \}$ and $\{ g_n \}$ are sequences that satisfy $f_n \to f$ a.e., $|f_n |  \le g_n$ and  $g_n \to g$ in $L^1$
then $f_n \to f$ in $L^1$. We apply that to the functions
$$
f_n = e''(\rho^n)\nabla_x \rho^n m^n \, , \quad g_n =  C | \nabla_x \rho^n | \,  \Bigg | \sqrt{\left( \frac{|m|^2}{\rho}\right)^n} \Bigg |  \, .
$$

Note first that, since $e''(\rho)$ is compactly supported in $(0, \infty)$, the vacuum region is avoided and 
 it follows from \eqref{eq:conv1} that
$$
e''(\rho^n)\nabla_x \rho^n m^n  \to e''(\rho) \nabla_x \rho \, m \quad \mbox{a.e.}
$$
Since $k(m,\rho) = \frac{|m|^2}{\rho}$ is convex, applying (the multivariate version of) Jensen's inequality we obtain
\begin{equation}
 \frac{ |m^n|^2}{\rho^n} (t,x) \leq \int_{-\infty}^\infty \int_{\rT^d} \phi^n(x-y)\phi^n(t-s) \frac{|m(s,y)|^2}{\rho(s,y)}\, dy ds =: \left( \frac{|m|^2}{\rho}\right)^n (t,x).
\end{equation}
Moreover, as $\frac{|m|^2}{\rho} \in L^1$, we have 
\begin{equation}
\label{eq:conv4}
\left( \frac{|m|^2}{\rho}\right)^n \longrightarrow  \frac{|m|^2}{\rho}  \quad \mbox{in} \;  L^1([0,T] \times \rT^d)  \, .
\end{equation}
Observe next that
$$
\begin{aligned}
 |  e''(\rho^n)\nabla_x \rho^n m^n | &= \left  | \sqrt{\rho^n} e''(\rho^n) \nabla_x \rho^n \frac{ m^n}{\sqrt{\rho^n}}  \right |
\\
&\le C |\nabla_x \rho^n| \sqrt{ \frac{ |m^n|^2 } {\rho^n} }
\\
&\le C |\nabla_x \rho^n| \sqrt{ \left( \frac{|m|^2}{\rho}\right)^n  }
\end{aligned}
$$
and, on account of \eqref{eq:conv3} and \eqref{eq:conv4}, 
$$
|\nabla_x \rho^n| \sqrt{ \left( \frac{|m|^2}{\rho}\right)^n  } \to |\nabla_x \rho|  \sqrt{ \frac{ |m|^2}{\rho} }  \quad \mbox{in} \;  L^1([0,T] \times \rT^d) \, .
$$
This completes the proof of \eqref{pr:erel2} and the lemma.
\end{proof}

Now we prove a stability estimate which immediately implies weak-strong uniqueness.
We restrict to cases where one of the two following hypotheses holds:
\begin{align}
\label{a1}\  \gamma &\geq 2 \tag{A1} \\
\label{a2} 1 < \gamma < 2  \quad &\text{and } \quad  \int_{\rT^d} \rho_0 = \int_{\rT^d} \bar \rho_0  \tag{A2}
\end{align}

\begin{theorem}
Let $\kappa (\rho) = C_\kappa$ constant and \eqref{a1}, \eqref{a2} hold. Consider $(\rho, m)$ a dissipative (or  conservative)  weak solution of \eqref{eq:EK} satisfying $(\bf H)$ on $[0,T) \times \rT^d$, $T>0$,
and 
let  $(\brho, \barm)$ be a strong solution  of \eqref{eq:EK} satisfying for some  $\delta >0$ the bound $\brho(t,x) \geq \delta $  on $[0,T) \times \rT^d.$
 Then there exists a constant $C>0$ depending on $(\brho , \bar m)$ and their derivatives  so that  for almost every $t \in [0,T):$
 \begin{multline}\label{eq:wsstab}
\int_{\rT^d} \left. \Big ( \frac{1}{2} \rho \Big | \frac{m}{\rho} -  \frac{\bar m}{\bar\rho}\Big | ^2 +  h_\gamma(\rho  | \, \bar \rho \right. ) + \frac{C_\kappa}{2} | \nabla_x\rho - \nabla_x\brho|^2 \Big )dx \Big |_t\\
\leq
  e^{Ct} \Big( \int_{\rT^d} \left. \left ( \frac{1}{2} \rho \Big | \frac{m}{\rho} -  \frac{\bar m}{\bar\rho}\Big | ^2 +  h_\gamma(\rho  | \, \bar \rho \right. ) + \frac{C_\kappa}{2} | \nabla_x\rho - \nabla_x\brho|^2 \right )dx \Big |_{t = 0} \Big) 
 \end{multline}
\end{theorem}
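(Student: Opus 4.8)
The plan is to couple the relative energy inequality \eqref{eq:RelEnKorGenFinalweak} with the identity of Lemma~\ref{lem:erel} and then close a Grönwall loop. First I would specialize to $\kappa(\rho)=C_\kappa$: the functions in \eqref{eq:rsH} become $s(\rho,q)=\tfrac{C_\kappa}{2}|q|^2$, $r(\rho,q)=C_\kappa\rho q$, $H(\rho,q)=C_\kappa q\otimes q$ and $F(\rho,q)=\tfrac{C_\kappa}{2}|q|^2$, all quadratic or bilinear, so, abbreviating $(\cdot|\cdot):=(\rho,\nabla_x\rho\,|\,\brho,\nabla_x\brho)$, the associated relative quantities collapse:
\begin{gather*}
F(\cdot|\cdot)=s(\cdot|\cdot)=\tfrac{C_\kappa}{2}|\nabla_x\rho-\nabla_x\brho|^2,\qquad r(\cdot|\cdot)=C_\kappa(\rho-\brho)(\nabla_x\rho-\nabla_x\brho),\\
H(\cdot|\cdot)=C_\kappa\,(\nabla_x\rho-\nabla_x\brho)\otimes(\nabla_x\rho-\nabla_x\brho).
\end{gather*}
Writing $h=h_\gamma+e$ and $p=p_\gamma+p_e$ with $p_e(\rho)=\rho e'(\rho)-e(\rho)\in C_c^\infty(0,\infty)$, and denoting by $\mathcal E(t)$ the left-hand side of \eqref{eq:wsstab}, inequality \eqref{eq:RelEnKorGenFinalweak} reads $\mathcal E(t)+\int_{\rT^d}e(\rho|\brho)\,dx\big|_t\le \mathcal E(0)+\int_{\rT^d}e(\rho|\brho)\,dx\big|_0+\mathcal R$, where the remainder $\mathcal R$ collects the four space-time integral terms on the right of \eqref{eq:RelEnKorGenFinalweak} (with their signs), now against $\nabla_x\baru$, $\dx\baru\,\big(\tfrac{C_\kappa}{2}|\nabla_x\rho-\nabla_x\brho|^2+p_\gamma(\rho|\brho)+p_e(\rho|\brho)\big)$, $\nabla_x\baru:H(\cdot|\cdot)$ and $\nabla_x\dx\baru\cdot r(\cdot|\cdot)$, with $\baru=\barm/\brho$. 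Using Lemma~\ref{lem:erel} to substitute $\int_{\rT^d}e(\rho|\brho)\,dx\big|_t=\int_{\rT^d}e(\rho|\brho)\,dx\big|_0-J_1+J_2$, where $J_1$ and $J_2$ are the two integrals on the right of \eqref{eq:erel}, the $e$-data cancels and one is left with $\mathcal E(t)\le \mathcal E(0)+\mathcal R+J_1-J_2$. It then suffices to prove $|\mathcal R|+|J_1|+|J_2|\le C\int_0^t\mathcal E(s)\,ds$; since $\mathcal E\in L^\infty_{\mathrm{loc}}$ by \textbf{(H)} and each term of $\mathcal E$ is nonnegative, Grönwall's lemma gives \eqref{eq:wsstab}.

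The estimates rest on three structural facts, all using $\brho\ge\delta$, the boundedness of $\nabla_x\baru,\dx\baru,\nabla_x\dx\baru$ (strong solution), the compact support of $e$, and the power law $h_\gamma$. (a) One has $p_\gamma(\rho|\brho)=(\gamma-1)h_\gamma(\rho|\brho)\ge0$; moreover $h_\gamma(\rho|\brho)\ge c|\rho-\brho|^2$ on any bounded range of $\rho$, and $h_\gamma(\rho|\brho)\ge c\rho$ once $\rho$ exceeds some $M=M(\delta,\gamma)$. (b) Under \eqref{a1}, $h_\gamma(\rho|\brho)\ge c|\rho-\brho|^2$ holds for \emph{all} $\rho\ge0$; under \eqref{a2}, conservation of mass gives $\int_{\rT^d}(\rho-\brho)\,dx=0$, whence the Poincaré--Wirtinger inequality on $\rT^d$ yields $\int_{\rT^d}|\rho-\brho|^2\,dx\le C\int_{\rT^d}|\nabla_x\rho-\nabla_x\brho|^2\,dx$; in either case $\int_{\rT^d}|\rho-\brho|^2\,dx\le C\mathcal E(t)$. (c) Since $e',e'',p_e\in C_c^\infty(0,\infty)$, the quantities $p_e(\rho|\brho)$, the Taylor remainder $e'(\rho)-e'(\brho)-e''(\brho)(\rho-\brho)$, and $e''(\rho)\nabla_x\rho-e''(\brho)\nabla_x\brho$ vanish unless $\rho$ or $\brho$ lies in a fixed bounded set; on $\{\rho>M\}$ they are $\le C\rho\le Ch_\gamma(\rho|\brho)$, and on $\{\rho>M,\ \brho\le b'\}$ (where $[a',b']\supset\operatorname{supp}(e'')$) one has $h_\gamma(\rho|\brho)\ge c_0>0$, so that set has measure $\le C\mathcal E(t)$.

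With these in hand the term-by-term bounds are mostly routine. The $\nabla_x\baru$ and $\nabla_x\baru:H(\cdot|\cdot)$ parts of $\mathcal R$ are quadratic in $u-\baru$, resp.\ $\nabla_x\rho-\nabla_x\brho$, so $\le C\int_0^t\int(\rho|u-\baru|^2+|\nabla_x\rho-\nabla_x\brho|^2)\le C\int_0^t\mathcal E$; the $\dx\baru$-part uses $p_\gamma(\rho|\brho)=(\gamma-1)h_\gamma(\rho|\brho)$, $|p_e(\rho|\brho)|\le Ch_\gamma(\rho|\brho)$ (facts (a),(c)), and the quadratic capillary term; the $r$-part is $\le C\int_0^t\int|\rho-\brho|\,|\nabla_x\rho-\nabla_x\brho|\le C\int_0^t\int(|\rho-\brho|^2+|\nabla_x\rho-\nabla_x\brho|^2)\le C\int_0^t\mathcal E$, and it is precisely here that \eqref{a1}/\eqref{a2}, i.e.\ the global $L^2$-control of $\rho-\brho$, is essential. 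For $J_1$, fact (c) gives $|e'(\rho)-e'(\brho)-e''(\brho)(\rho-\brho)|\le Ch_\gamma(\rho|\brho)$, hence $|J_1|\le C\int_0^t\mathcal E$. The main obstacle is $J_2=\int_0^t\int(e''(\rho)\nabla_x\rho-e''(\brho)\nabla_x\brho)\cdot(m-\barm)$, which is only \emph{linear} in $m-\barm=\rho(u-\baru)+(\rho-\brho)\baru$ and hence not directly quadratic. I would split the domain: on $\{\rho\le2b'\}$ the density is bounded, $|e''(\rho)\nabla_x\rho-e''(\brho)\nabla_x\brho|\le C(|\nabla_x\rho-\nabla_x\brho|+|\rho-\brho|)$ and $\rho|u-\baru|\le C\sqrt{\rho|u-\baru|^2}$, so Young's inequality gives a bound $\le C\int_0^t\int(|\nabla_x\rho-\nabla_x\brho|^2+|\rho-\brho|^2+\rho|u-\baru|^2)\le C\int_0^t\mathcal E$; on $\{\rho>2b'\}$ we have $e''(\rho)=0$, so the integrand reduces to $|{-}e''(\brho)\nabla_x\brho\cdot(m-\barm)|\le C(\rho|u-\baru|+|\rho-\brho|)\mathbf1_{\{\brho\le b'\}}$, and Cauchy--Schwarz together with $\int_{\{\rho>M,\ \brho\le b'\}}\rho\,dx\le C\mathcal E$ and $\int_{\{\rho>M,\ \brho\le b'\}}|\rho-\brho|\,dx\le C\mathcal E$ (facts (a),(c)) closes the estimate. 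Collecting everything gives $\mathcal E(t)\le\mathcal E(0)+C\int_0^t\mathcal E(s)\,ds$ for a.e.\ $t$, and Grönwall completes the proof. The crux is that, after invoking Lemma~\ref{lem:erel}, the $e$-contributions must come out homogeneous of order $\int_0^t\mathcal E$ (no free constants, no $\sqrt{\mathcal E}$ terms), which is what forces exploiting both the far-field coercivity of $h_\gamma$ and the compact support of $e$ in the treatment of $J_2$.
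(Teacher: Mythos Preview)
Your proof is correct and follows the same overall strategy as the paper: subtract the identity of Lemma~\ref{lem:erel} from the relative energy inequality \eqref{eq:RelEnKorGenFinalweak}, bound every remaining term by $C\int_0^t\mathcal E$, and apply Gr\"onwall. The only noteworthy difference is the treatment of the hardest term $J_2=\int_0^t\!\int (e''(\rho)\nabla_x\rho-e''(\brho)\nabla_x\brho)\cdot(m-\barm)$. The paper expands $m-\barm=\rho(u-\baru)+\baru(\rho-\brho)$ and $e''(\rho)\nabla_x\rho-e''(\brho)\nabla_x\brho=e''(\rho)(\nabla_x\rho-\nabla_x\brho)+(e''(\rho)-e''(\brho))\nabla_x\brho$ into four cross terms, and handles the only nontrivial one, $(e''(\rho)-e''(\brho))\nabla_x\brho\cdot\rho(u-\baru)$, via the \emph{pointwise} bound $\sqrt{\rho}\,|e''(\rho)-e''(\brho)|\le C|\rho-\brho|$ (proved by the same large/small-$\rho$ split you use, but kept at the level of a single scalar inequality). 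Your route---domain decomposition $\{\rho\le 2b'\}\cup\{\rho>2b'\}$ together with $\int_{\{\rho>2b',\ \brho\le b'\}}\rho\,dx\le C\mathcal E$ and Cauchy--Schwarz---reaches the same conclusion but is slightly more involved; in particular, make sure your threshold $2b'$ and the $M$ from fact~(a) are reconciled (take $M=2b'$, or treat $\{2b'<\rho\le M\}$ as part of the bounded-density case). Also note that your statement ``$|p_e(\rho|\brho)|\le Ch_\gamma(\rho|\brho)$'' cannot be pointwise under \eqref{a2} (for $1<\gamma<2$ and moderate $\rho$ one only has $|p_e(\rho|\brho)|\le C|\rho-\brho|^2$, controlled \emph{after integration} by Poincar\'e), which is exactly how the paper phrases it; your facts (b) and (c) already contain what is needed, so this is only an expository point.
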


\begin{proof}
For convenience let us define
\begin{equation}
 \begin{split}
  F_\gamma (\rho, \nabla_x \rho |\brho, \nabla_x  \brho) &:=    h_\gamma(\rho  | \, \bar \rho ) + \frac{C_\kappa}{2} | \nabla_x\rho - \nabla_x\brho|^2 ;\\
  K(\rho,m|\brho,\bar m) &:= \frac{1}{2} \rho \Big | \frac{m}{\rho} -  \frac{\bar m}{\bar\rho}\Big | ^2;\\
  p_e (\rho)&:= \rho e'(\rho) - e(\rho).
 \end{split}
\end{equation}

 By subtracting \eqref{eq:erel} from \eqref{eq:RelEnKorGenFinalweak} we obtain
\begin{align}\label{eq:redrel}
& \int_{\rT^d} F_\gamma (\rho, \nabla_x \rho |\brho, \nabla_x  \brho) +  K(\rho,m|\brho,\bar m)dx \Big |^t_0
\\
 &\leq - \int \!\!\int_{[0,t)\times\rT^d}  
 \rho \left( \frac{m}{\rho}-   \frac{\bar m}{\bar\rho}\right )\otimes \left( \frac{m}{\rho}-   \frac{\bar m}{\bar\rho}\right ) :\nabla_x  \left(    \frac{\bar m}{\bar\rho}\right )
 dsdx
 \\
&\ - \int \!\!\int_{[0,t)\times\rT^d}    \dx \left (\frac{\bar m}{\bar\rho}\right ) \Big (  s(\rho, \nabla_x\rho\left | \bar \rho, \nabla_x\bar \rho \right. ) + p(\rho | \brho) \Big )  dsdx
 \nonumber\\
 &\ 
 - \int \!\!\int_{[0,t)\times\rT^d} \left [ \nabla_x \left (\frac{\bar m}{\bar\rho} \right) : H(\rho, \nabla_x\rho\left | \bar \rho, \nabla_x\bar \rho \right. ) 
 + \nabla_x \dx \left (\frac{\bar m}{\bar\rho}\right ) \cdot r(\rho, \nabla_x\rho\left | \bar \rho, \nabla_x\bar \rho \right. )\right]dsdx \nonumber\\
&\ + \int\!\! \int_{[0,t)\times\rT^d} \div ( \barm) e'(\rho|\brho) -  \big( e''(\rho)\nabla_x \rho - e''(\brho)\nabla_x \brho\big) \big( m - \barm \big) \, ds dx\nonumber
.
\end{align}
In view of \eqref{eq:rsH} some terms on the right hand side of \eqref{eq:redrel} are estimated in a straightforward fashion:
\begin{equation}\label{eq:redrel1}
 \begin{split}
 \Big| \rho \left( \frac{m}{\rho}-   \frac{\bar m}{\bar\rho}\right )\otimes \left( \frac{m}{\rho}-   \frac{\bar m}{\bar\rho}\right ) :\nabla_x  \left(    \frac{\bar m}{\bar\rho}\right )\Big|
 &\leq C K(\rho,m|\brho,\bar m),
 \\
 \nabla_x \left (\frac{\bar m}{\bar\rho} \right) : H(\rho, \nabla_x\rho\left | \bar \rho, \nabla_x\bar \rho \right. ) &\leq C F_\gamma (\rho, \nabla_x \rho |\brho, \nabla_x  \brho).
 \end{split}
\end{equation}
Using the decomposition \eqref{decomp_h}, equation \eqref{eq:rsH} and the fact that for $\gamma$-laws pressure and inner energy coincide up to a constant we find
\begin{multline}\label{eq:redrel2}
 s(\rho, \nabla_x\rho | \bar \rho, \nabla_x\bar \rho  ) + p(\rho | \brho) 
 = (\gamma -1) h_\gamma(\rho|\brho) 
 + p_e(\rho|\brho) + \frac{C_\kappa}{2} |\nabla_x \rho - \nabla_x \brho|^2
\\
 \leq C F_\gamma (\rho, \nabla_x \rho |\brho, \nabla_x  \brho) + p_e(\rho|\brho).
\end{multline}
Inserting \eqref{eq:redrel1} and \eqref{eq:redrel2} into \eqref{eq:redrel} we get
\begin{align}\label{eq:redrel3}
& \int_{\rT^d} F_\gamma (\rho, \nabla_x \rho |\brho, \nabla_x  \brho)+ K(\rho,m|\brho,\bar m)dx \Big |_0^t \\
 &\le  C \int \!\!\int_{[0,t)\times\rT^d}  F_\gamma (\rho, \nabla_x \rho |\brho, \nabla_x  \brho) + K(\rho,m|\brho,\bar m)ds dx
 \nonumber\\
 &\  - \int \!\!\int_{[0,t)\times\rT^d} \left [
  \dx \left (\frac{\bar m}{\bar\rho}\right ) p_e(\rho \left | \bar \rho \right. )
 + \nabla_x \dx \left (\frac{\bar m}{\bar\rho}\right ) \cdot r(\rho, \nabla_x\rho\left | \bar \rho, \nabla_x\bar \rho \right. )\right]dsdx \nonumber\\
&\ + \int \!\!\int_{[0,t)\times\rT^d} \div ( \barm)  e'(\rho|\brho) - \big( e''(\rho)\nabla_x \rho - e''(\brho)\nabla_x \brho\big) \big( m - \barm \big) \, ds dx\nonumber
.
\end{align}

For the remaining terms the $L^2$ norm of $(\rho - \brho)$ has to be estimated. In case of \eqref{a1}, in the range $\gamma \ge 2$, 
Lemma 2.4 in \cite{LT13} asserts that there exist constants $R_0,\,C_1,\,C_2$ depending on $\brho$ such that
 \begin{equation*}
  h_\gamma (\rho | \rho) \geq \left\{
  \begin{array}{cl}
  C_1 |\rho - \brho|^2 & \text{ for } \rho\leq R_0,\\
   C_2 |\rho - \brho|^\gamma & \text{ for } \rho> R_0.
  \end{array}
  \right.
 \end{equation*}
In this estimate, exploiting the fact that $\brho$ is a bounded solution, $R_0$ can be chosen sufficiently  large 
so that for $\gamma \geq 2$ we have 
\begin{equation}
\label{fact1}
|\rho - \brho|^2  \le 
 C  \,  h_\gamma (\rho | \brho)   \quad \forall \rho > 0 \, .
\end{equation}
In case of \eqref{a2}  mass conservation implies that $\rho(t,\cdot) - \bar \rho(t,\cdot)$ has mean value zero for any $t \in [0,T]$ such that
\begin{equation}
\label{fact2}
 \| \rho - \bar \rho\|_{L^2(\rT^d)}  \leq C \| \nabla_x \rho - \nabla_x \bar \rho\|_{L^2(\rT^d)} .
\end{equation}

Using now \eqref{fact1} or \eqref{fact2} we estimate the remaining terms. For  
$$
r(\rho, \nabla_x\rho\left | \bar \rho, \nabla_x\bar \rho \right. )=C_\kappa (\rho - \bar \rho) (\nabla_x \rho - \nabla_x \bar \rho)
$$  
we obtain, using Young's inequality,
\begin{equation}
 \int_{\rT^d} \left| \nabla_x \dx \left (\frac{\bar m}{\bar\rho}\right ) \cdot r(\rho, \nabla_x\rho\left | \bar \rho, \nabla_x\bar \rho \right. ) \right| \, dx \leq C  \int_{\rT^d} F_\gamma (\rho, \nabla_x \rho |\brho, \nabla_x  \brho)\, dx.
\end{equation}
We also obtain, since all derivatives of $e$ and $p_e$ are uniformly bounded,
\begin{equation}
 \begin{split}
 \int_{\rT^d} \left|  \div ( \barm)  e'(\rho|\brho) \right|\, dx & \leq C  \int_{\rT^d} F_\gamma (\rho, \nabla_x \rho |\brho, \nabla_x  \brho) \, dx,\\
 \int_{\rT^d} \left|\dx \left (\frac{\bar m}{\bar\rho}\right ) p_e(\rho\left | \bar \rho \right. ) \right|\, dx & \leq C \int_{\rT^d} F_\gamma (\rho, \nabla_x \rho |\brho, \nabla_x  \brho) \, dx.
 \end{split}
\end{equation}
Inserting these estimates into \eqref{eq:redrel3} we obtain
\begin{align}
\label{eq:redrel4}
& \int_{\rT^d} F_\gamma (\rho, \nabla_x \rho |\brho, \nabla_x  \brho)+ K(\rho,m|\brho,\bar m)dx \Big |_0^t
\\
&\, \le  C \int \!\!\int_{[0,t)\times\rT^d}  F_\gamma (\rho, \nabla_x \rho |\brho, \nabla_x  \brho)+ K(\rho,m|\brho,\bar m)ds dx
 \nonumber
 \\
& \quad -
   \int \!\!\! \int_{[0,t)\times\rT^d} \big( e''(\rho)\nabla_x \rho - e''(\brho)\nabla_x \brho\big) \big( m - \barm \big)\, ds dx\nonumber .
\end{align}
We rewrite
\begin{multline}\label{eq:redrel5}
 \big( e''(\rho)\nabla_x \rho - e''(\brho)\nabla_x \brho\big) \big( m - \barm \big)\\
 = \Big( e''(\rho) \big(\nabla_x \rho - \nabla_x \brho\big) + \big(e''(\rho) - e''(\brho)\big) \nabla_x \brho \Big) \Big( \rho (u - \bar u) + \bar u (\rho - \brho) \Big).
\end{multline}
Since $\sqrt{\rho} e''(\rho)$ is bounded uniformly in $\rho$ we have
\begin{equation}\label{eq:redrel6}
 \begin{split}
  \Big| \sqrt{\rho} e''(\rho) \big(\nabla_x \rho - \nabla_x \brho\big)  \sqrt{\rho}  (u - \bar u)  \Big| & \leq C F_\gamma (\rho, \nabla_x \rho |\brho, \nabla_x  \brho) + C  K(\rho,m|\brho,\bar m);\\
  \Big| e''(\rho) \big(\nabla_x \rho - \nabla_x \brho\big)\bar u (\rho - \brho) \Big| & \leq C F_\gamma (\rho, \nabla_x \rho |\brho, \nabla_x  \brho); \\
  \Big| \big(e''(\rho) - e''(\brho)\big) (\nabla_x \brho) \bar u (\rho - \brho)\Big| & \leq C F_\gamma (\rho, \nabla_x \rho |\brho, \nabla_x  \brho).
 \end{split}
\end{equation}
It remains to derive a bound for 
\[ \big(e''(\rho) - e''(\brho)\big) \nabla_x \brho \, \rho (u - \bar u).\]
It suffices to show that $|\big(e''(\rho) - e''(\brho)\big) \sqrt{\rho}|$ is uniformly bounded by $C | \rho - \brho|.$
This is trivial, as long as $\rho$ is bounded from above so we restrict ourselves to the case $\rho >R+1 $ where $R$ is a constant satisfying
\[ R > \sup_{t,x} \brho(t,x)  \text{ and }  e(r) =0 \quad \forall \ r> R  .\]
In that case
\begin{multline}\label{eq:redrel7}
 \Big |\big(e''(\rho) - e''(\brho)\big) \sqrt{\rho}\Big | =\Big | \big(e''(\rho) - e''(\brho)\big) \sqrt{R} + \big(e''(\rho) - e''(\brho)\big)(\sqrt{\rho} - \sqrt{R})\Big |\\
  = \Big |\big(e''(\rho) - e''(\brho)\big) \sqrt{R} + \big(- e''(\brho)\big)\frac{\rho - R}{\sqrt{\rho} + \sqrt{R}}\Big |
  \leq C | \rho - \brho| \sqrt{R} + \frac{C}{2 \sqrt{R}} | \rho - \brho|.
\end{multline}
By combining \eqref{eq:redrel6} and \eqref{eq:redrel7} and inserting them into \eqref{eq:redrel4} we obtain
\begin{multline}\label{eq:redrel8}
 \int_{\rT^d} F_\gamma (\rho, \nabla_x \rho |\brho, \nabla_x  \brho) +  K(\rho,m|\brho,\bar m)dx \Big |_t
\leq
 \int_{\rT^d} F_\gamma (\rho, \nabla_x \rho |\brho, \nabla_x  \brho)+ K(\rho,m|\brho,\bar m)dx  \Big |_0 \\
 + C \int \!\!\int_{[0,t)\times\rT^d}  F_\gamma (\rho, \nabla_x \rho |\brho, \nabla_x  \brho)+ K(\rho,m|\brho,\bar m)\, ds dx.
\end{multline}
The theorem follows from applying Gronwall's lemma to \eqref{eq:redrel8}.
\end{proof}


\section{From Euler-Korteweg with large friction to Cahn-Hilliard}\label{sec:lf}
In this section we study the relaxation limit from the Euler-Korteweg system with large friction to the Cahn-Hilliard equation. This limit was studied for monotone (increasing) pressures, i.e., convex energies, in \cite{LT16}
and we will show that the same techniques as in Section \ref{sec:wsnce} allow us to extend the results to the non-monotone case. As before we will assume that the energy density has a decomposition \eqref{decomp_h} with \eqref{a1} or \eqref{a2} and $\kappa(\rho) \equiv C_\kappa > 0$.
After rescaling time the Euler-Korteweg system with friction can be written as (cf. \cite[Sec. 4]{LT16})
\begin{equation}\label{eq:eklf}
\begin{split}
\rho_t + \frac{1}{\veps} \div m &=0\\
m_t + \frac{1}{\veps} \div \big( \frac{m \otimes m}{\rho} \big) &=
- \frac{1}{\veps^2} m - \frac{1}{\veps}
\rho \nabla_x \big( h'(\rho) - C_\kappa \Delta_x \rho\big).
\end{split}
\end{equation}
Our goal is to investigate the limit $\veps \rightarrow 0$ of \eqref{eq:eklf}.
Formally the limit equation is
\begin{equation}\label{eq:CH}
\rho_t - \div \big( \rho \nabla_x \big( h'(\rho) - C_\kappa \Delta_x \rho \big) \big)=0.
\end{equation}
We will make this rigorous on time intervals for which \eqref{eq:CH} possesses a strong solution and \eqref{eq:eklf} has weak solutions for all (sufficiently small) $\veps>0$.

To this end we will establish a relative energy framework comparing weak solutions to 
\eqref{eq:eklf} and classical solutions to \eqref{eq:CH}.

\begin{definition} \label{def:wksolf}
(i) A function $( \rho,  m)$ with  $\rho \in C([0, \infty) ; L^1 ( \rT^d ) )$, $m \in C  \big (  [0, \infty) ;   L^1(\rT^d, \rR^d)  \big )$,
$\rho \ge 0$,  is a weak
solution of \eqref{eq:eklf}, if $\frac{m \otimes m}{\rho}$, $S \in L^1_{loc}  \left (  (0, \infty) \times \rT^d ) \right )^{d \times d}$,  and $(\rho, m)$ satisfy
\begin{equation}\label{eq:wksolf}
\begin{aligned}
- \iint \rho \psi_t + \frac{1}{\veps}m \cdot \nabla_x \psi dx d t  = \int \rho ( 0,x) \psi (0,x) dx \, ,  \qquad  \forall \psi \in C^1_c \left ( [0, \infty) ; C^1 (\rT^d) \right )\, ;
\\
- \iint m \cdot \varphi_t + \frac{1}{\veps}\frac{m \otimes m}{\rho} : \nabla_x \varphi - \frac{1}{\veps}S : \nabla_x \varphi - \frac{1}{\veps^2} m \varphi \,  dx dt = \int m(0,x) \cdot \varphi(0,x) dx  \, ,  \qquad \qquad 
\\
 \forall \varphi  \in C^1_c \left ( [0, \infty) ;  \big ( C^1 (\rT^d)  \big )^d \right ) \, .
\end{aligned}
\end{equation}

\medskip\noindent
(ii) If,  in addition, $\frac{1}{2} \frac{|m|^2}{\rho} + h(\rho) + \frac{\kappa(\rho)}{2} |\nabla_x \rho|^2 \in C([0, \infty) ; L^1 ( \rT^d ) )$ and it satisfies
\begin{multline}
 \label{eq:diswsf}
  - \iint \left ( \frac{1}{2} \frac{|m |^2}{\rho} + h(\rho) + \frac{\kappa(\rho)}{2} |\nabla_x \rho|^2  \right ) \dot\theta(t) \, dx dt\\
 \le   \int \left ( \frac{1}{2} \frac{|m |^2}{\rho } + h(\rho) + \frac{\kappa(\rho)}{2} |\nabla_x \rho|^2  \right ) \Big |_{t=0}  \theta(0)dx 
 - \frac{1}{\veps^2} \iint \frac{|m|^2}{\rho} \theta(t) dt dx,
 \end{multline}
 for any non-negative $ \theta  \in W^{1, \infty} [0, \infty)$ compactly supported on $[0, \infty)$
then $(\rho, m)$ is called a dissipative weak solution. 

 \medskip\noindent
(iii) By contrast,
if $\frac{1}{2} \frac{|m|^2}{\rho} +h(\rho) + \frac{\kappa(\rho)}{2} |\nabla_x \rho|^2  \in C([0, \infty) ; L^1 ( \rT^d ) )$ and it satisfies \eqref{eq:diswsf}
as an equality, then $(\rho, m)$  is called a conservative weak solution.
\end{definition}

Note that solutions to \eqref{eq:CH} can be understood as solutions to \eqref{eq:eklf}
with forcing. 
For any solution $\brho $ of \eqref{eq:CH} we set
\begin{equation}
\label{order1}
 \bar m = - \veps \brho (h'(\brho) - C_\kappa \Delta_x \brho) = \mathcal{O}(\veps)
\end{equation}
 and 
\begin{equation}
\label{order2}
\bar E:= \veps \div \Big( \brho \nabla_x (h'(\brho) - C_\kappa \Delta_x \brho)
\otimes \nabla_x (h'(\brho) - C_\kappa \Delta_x \brho)\Big) -
\veps \big( \brho (h'(\brho) - C_\kappa \Delta_x \brho ) \big)_t= \mathcal{O}(\veps).
\end{equation}
Then, $(\brho, \bar m)$ satisfy
\begin{equation}\label{eq:ekch}
\begin{split}
\brho_t + \frac{1}{\veps} \div \bar m &=0\\
\bar m_t + \frac{1}{\veps} \div \big( \frac{\bar m \otimes\bar m}{\brho} \big) &=
- \frac{1}{\veps^2} \bar m - \frac{1}{\veps}
\brho \nabla_x \big( h'(\brho) - C_\kappa \Delta_x \brho\big) + \bar E \, .
\end{split}
\end{equation}
For smooth $\brho$ the forcing term is of order $O(\eps)$ and is visualized henceforth as an error term.

\begin{definition}\label{def:sch}
 We call $\brho$ a strong solution of \eqref{eq:CH} on $[0,T) \times \rT^d$ provided
 \begin{align}
  \rho \in C^0([0,T), C^4(\rT^d)) \cap C^1([0,T), C^2(\rT^d))
 \end{align}
and \eqref{eq:CH} is satisfied in a point-wise sense.
\end{definition}
Note that the regularity required in definition \ref{def:sch} is more than what we need to give a point-wise meaning to each term in \eqref{eq:CH}.
The imposed regularity makes $(\brho,\bar m)$ a strong solution of \eqref{eq:ekch}.

Next, \cite[Thm 4.2]{LT16} implies for any weak (dissipative or conservative) weak solution $(\rho,m)$ of \eqref{eq:eklf} and any strong solution $\brho$ of \eqref{eq:CH} the following inequality 
\begin{equation}
 \label{eq:RelEnKorlf}
\begin{split}
& \int_{\rT^d} \left. \Big ( \frac{1}{2} \rho \Big | \frac{m}{\rho} -  \frac{\bar m}{\bar\rho}\Big | ^2 +  h(\rho  | \, \bar \rho \right. )+ \frac{C_\kappa}{2}|\nabla_x \rho - \nabla_x \brho|^2 \Big )dx \Big |_t\\
&\leq
 \int_{\rT^d} \left. \left ( \frac{1}{2} \rho \Big | \frac{m}{\rho} -  \frac{\bar m}{\bar\rho}\Big | ^2 +  h(\rho  | \, \bar \rho \right. )+\frac{C_\kappa}{2}|\nabla_x \rho - \nabla_x \brho|^2  \right )dx \Big |_0
\\
&-\frac{1}{\veps^2} \int \!\!\int_{[0,t)\times\rT^d}\rho \Big | \frac{m}{\rho} -  \frac{\bar m}{\bar\rho}\Big | ^2\, ds dx
- \int \!\!\int_{[0,t)\times\rT^d} \bar E \frac{\rho}{\bar \rho} \Big( 
\frac{m}{\rho} - \frac{\bar m}{\bar \rho}
\Big)\, ds dx
\\ 
 &\  -\frac{1}{\veps} \int \!\!\int_{[0,t)\times\rT^d} \left [\rho \left( \frac{m}{\rho}- \frac{\bar m}{\bar\rho}\right )\otimes \left( \frac{m}{\rho}-  \frac{\bar m}{\bar\rho}\right ) :\nabla_x  \left( \frac{\bar m}{\bar\rho}\right )\right] \, ds dx
 \\
  &\  -\frac{1}{\veps} \int \!\!\int_{[0,t)\times\rT^d} 
  \left[\dx \left (\frac{\bar m}{\bar\rho}\right ) \Big(\frac{C_\kappa}{2}|\nabla_x \rho - \nabla_x \brho|^2   + p(\rho|\bar \rho)   \Big)  \right]dsdx
\\
 &\ 
 -\frac{C_\kappa}{\veps} \int \!\!\int_{[0,t)\times\rT^d} \left [ \nabla_x \left (\frac{\bar m}{\bar\rho} \right) : \nabla_x(\rho - \brho) \otimes \nabla_x(\rho - \bar \rho) 
 + \nabla_x \dx \left (\frac{\bar m}{\bar\rho}\right )(\rho- \brho) \nabla_x(\rho -\bar \rho) \right]dsdx\, .
\end{split}
\end{equation}

Invoking Lemma \ref{lem:erel} we obtain the following estimate from \eqref{eq:RelEnKorlf}
\begin{equation}
 \label{eq:RelEnKorlf2}
\begin{split}
& \int_{\rT^d} \left. \Big ( \frac{1}{2} \rho \Big | \frac{m}{\rho} -  \frac{\bar m}{\bar\rho}\Big | ^2 +  h_\gamma(\rho  | \, \bar \rho \right. )+ \frac{C_\kappa}{2}|\nabla_x \rho - \nabla_x \brho|^2 \Big )dx \Big |_t\\
&\leq
 \int_{\rT^d} \left. \left ( \frac{1}{2} \rho \Big | \frac{m}{\rho} -  \frac{\bar m}{\bar\rho}\Big | ^2 +  h_\gamma(\rho  | \, \bar \rho \right. )+\frac{C_\kappa}{2}|\nabla_x \rho - \nabla_x \brho|^2  \right )dx \Big |_0
\\
&-\frac{1}{\veps^2} \int \!\!\int_{[0,t)\times\rT^d}\rho \Big | \frac{m}{\rho} -  \frac{\bar m}{\bar\rho}\Big | ^2\, ds dx
- \int \!\!\int_{[0,t)\times\rT^d} \bar E \frac{\rho}{\bar \rho} \Big( 
\frac{m}{\rho} - \frac{\bar m}{\bar \rho}
\Big)\, ds dx
\\ 
 &\  -\frac{1}{\veps} \int \!\!\int_{[0,t)\times\rT^d} \left [\rho \left( \frac{m}{\rho}-   \frac{\bar m}{\bar\rho}\right )\otimes \left( \frac{m}{\rho}-   \frac{\bar m}{\bar\rho}\right ) :\nabla_x  \left( \frac{\bar m}{\bar\rho}\right )\right] \, ds dx
 \\
  &\  -\frac{1}{\veps} \int \!\!\int_{[0,t)\times\rT^d} 
  \left[\dx \left (\frac{\bar m}{\bar\rho}\right ) \Big(\frac{C_\kappa}{2}|\nabla_x \rho - \nabla_x \brho|^2   + p(\rho|\bar \rho)   \Big)  \right]dsdx
\\
 &\ 
 -\frac{C_\kappa}{\veps} \int \!\!\int_{[0,t)\times\rT^d} \left [ \nabla_x \left (\frac{\bar m}{\bar\rho} \right) : \nabla_x(\rho - \brho) \otimes \nabla_x(\rho - \bar \rho) 
 + \nabla_x \dx \left (\frac{\bar m}{\bar\rho}\right )(\rho- \brho) \nabla_x(\rho -\bar \rho) \right]dsdx\\
 &\ +\frac{1}{\veps} \int\!\! \int_{[0,t)\times\rT^d} \div ( \barm)  e'(\rho|\brho)  - \big( e''(\rho)\nabla_x \rho - e''(\brho)\nabla_x \brho\big) \big( m - \barm \big)\, ds dx \, .
\end{split}
\end{equation}

Following the arguments in the proof of Theorem 3.3 and using the orders in \eqref{order1} and \eqref{order2}, 
we obtain:

\begin{theorem}
 Let $(\brho, \barm)$ be a strong solution of \eqref{eq:CH} on $[0,T) \times \rT^d$ for some $T>0,$ for which there exists $\delta >0$ such that $\brho(t,x) \geq \delta $ for all $(t,x) \in [0,T) \times \rT^d.$
 Let \eqref{a1} or \eqref{a2} hold.
 Then for any conservative or dissipative weak solution
 $(\rho, m)$   of  \eqref{eq:eklf} on $[0,T) \times \rT^d$ satisfying $(\bf H)$ the function
\begin{equation}
\Psi_\eps (t):=  \int_{\rT^d} \left. \Big ( \frac{1}{2} \rho \Big | \frac{m}{\rho} -  \frac{\bar m}{\bar\rho}\Big | ^2 +  h_\gamma(\rho  | \, \bar \rho \right. ) + \frac{C_\kappa}{2} | \nabla_x\rho - \nabla_x\brho|^2 \Big )dx \Big |_t
\end{equation}
 satisfies the following estimate for almost every $t \in [0,T):$
 \[ \Psi_\eps (t) \leq e^{Ct}( \Psi_\eps  (0) + \veps^4)\]
 with $C$ a positive constant depending only on $T$, $K_1$, $\bar \rho$, $\bar m$ and their derivatives. Moreover, if $\Psi_\eps (0) \rightarrow 0$ 
 as $\veps \rightarrow 0$, then
 \[ \sup_{t \in [0,T]} \Psi_\eps (t) \rightarrow 0 \quad \text{as} \; \veps \rightarrow 0.\]
\end{theorem}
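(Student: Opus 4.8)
The plan is to rerun the proof of Theorem 3.3 almost verbatim, starting from the relative energy inequality \eqref{eq:RelEnKorlf2} --- in which Lemma \ref{lem:erel} has already removed the $e$--contributions from the left-hand side --- but now keeping careful track of the powers of $\veps$ carried by each term. I keep the abbreviations $F_\gamma(\rho,\nabla_x\rho\,|\,\brho,\nabla_x\brho):=h_\gamma(\rho|\brho)+\tfrac{C_\kappa}{2}|\nabla_x\rho-\nabla_x\brho|^2$, $K(\rho,m|\brho,\barm)=\tfrac12\rho|\tfrac m\rho-\tfrac{\barm}{\brho}|^2$ and $p_e(\rho):=\rho e'(\rho)-e(\rho)$ from that proof, so that $\Psi_\eps(t)=\int_{\rT^d}(F_\gamma+K)\,dx\big|_t$. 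The decisive structural input is that, because $\brho\ge\delta>0$ and $\brho$ is smooth in the sense of Definition \ref{def:sch}, the definitions \eqref{order1}--\eqref{order2} force
\[
\tfrac{\barm}{\brho},\ \nabla_x\tfrac{\barm}{\brho},\ \dx\tfrac{\barm}{\brho},\ \nabla_x\dx\tfrac{\barm}{\brho},\ \div\barm,\ \bar E\ =\ O(\veps)\quad\text{in }L^\infty([0,T)\times\rT^d),
\]
with implied constants depending only on $T$, $\delta$, $\brho$, $\barm$ and their derivatives; hence every prefactor $\tfrac1\veps$ occurring in \eqref{eq:RelEnKorlf2} multiplies an $O(\veps)$ quantity and is in effect $O(1)$.

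With this at hand I would dispose of the ``mechanical'' terms of \eqref{eq:RelEnKorlf2} exactly as in \eqref{eq:redrel1}--\eqref{eq:redrel8}. The convective term, the pressure/capillarity term (after writing $p(\rho|\brho)=(\gamma-1)h_\gamma(\rho|\brho)+p_e(\rho|\brho)+\tfrac{C_\kappa}{2}|\nabla_x\rho-\nabla_x\brho|^2$ as in \eqref{eq:redrel2}), the two Korteweg-type terms, and $\tfrac1\veps\!\int\!\!\int\div(\barm)\,e'(\rho|\brho)$ each carry precisely one copy of $\nabla_x\tfrac{\barm}{\brho}$, $\dx\tfrac{\barm}{\brho}$, $\nabla_x\dx\tfrac{\barm}{\brho}$ or $\div\barm$, which absorbs the $\tfrac1\veps$; after that, Young's inequality, the uniform bounds on the derivatives of $e$ and $p_e$, and \eqref{fact1} or \eqref{fact2} (to control $\|\rho-\brho\|_{L^2(\rT^d)}$ by $F_\gamma$) bound all of them by $C\!\int\!\!\int(F_\gamma+K)\,ds\,dx$.

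The two contributions that genuinely require the friction dissipation $-\tfrac1{\veps^2}\!\int\!\!\int\rho|\tfrac m\rho-\tfrac{\barm}{\brho}|^2=-\tfrac{2}{\veps^2}\!\int\!\!\int K$ present in \eqref{eq:RelEnKorlf2} are the forcing error and the momentum part of the $e''$--coupling. For the former I would use Young's inequality with a weight $\propto\veps^2$,
\[
\Big|\bar E\,\tfrac{\rho}{\brho}\big(\tfrac m\rho-\tfrac{\barm}{\brho}\big)\Big|\ \le\ \veps^2\tfrac{|\bar E|^2}{\brho^2}\rho+\tfrac1{2\veps^2}K\ \le\ C\veps^4\rho+\tfrac1{2\veps^2}K,
\]
so that, integrating over $[0,t)\times\rT^d$ and invoking \eqref{hypCauchyK1}, this term contributes at most $C\veps^4+\tfrac1{2\veps^2}\!\int\!\!\int K$. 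For the latter, in $-\tfrac1\veps\!\int\!\!\int(e''(\rho)\nabla_x\rho-e''(\brho)\nabla_x\brho)(m-\barm)$ I would substitute $m-\barm=\rho(u-\baru)+\baru(\rho-\brho)$ and $e''(\rho)\nabla_x\rho-e''(\brho)\nabla_x\brho=e''(\rho)(\nabla_x\rho-\nabla_x\brho)+(e''(\rho)-e''(\brho))\nabla_x\brho$ as in \eqref{eq:redrel5}. The two resulting subterms carrying the factor $\baru=\tfrac{\barm}{\brho}=O(\veps)$ have their $\tfrac1\veps$ absorbed and are bounded by $C\!\int\!\!\int F_\gamma$ exactly as in \eqref{eq:redrel6}--\eqref{eq:redrel7}; each of the two subterms carrying $\rho(u-\baru)$ is written as $\tfrac1\veps(\cdots)\sqrt\rho\cdot\sqrt\rho(u-\baru)$ and, by Young's inequality with a weight $\propto\veps$, is bounded by $C\!\int\!\!\int F_\gamma+\tfrac1{2\veps^2}\!\int\!\!\int K$, using that $\sqrt\rho\,e''(\rho)$ is bounded and that $|(e''(\rho)-e''(\brho))\sqrt\rho|\le C|\rho-\brho|$ (see \eqref{eq:redrel7}), together with \eqref{fact1}/\eqref{fact2}.

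The three $O(\veps^{-2})$ contributions just produced sum to at most $\tfrac3{2\veps^2}\!\int\!\!\int K<\tfrac{2}{\veps^2}\!\int\!\!\int K$, so they are absorbed by the friction dissipation and one is left with
\[
\Psi_\eps(t)\ \le\ \Psi_\eps(0)+C\veps^4+C\!\int_0^t\Psi_\eps(s)\,ds .
\]
Gronwall's inequality (and a harmless enlargement of $C$) then gives $\Psi_\eps(t)\le e^{Ct}(\Psi_\eps(0)+\veps^4)$ with $C$ depending only on $T$, $K_1$, $\brho$, $\barm$ and their derivatives, and the last convergence assertion follows immediately. I expect the main obstacle to be precisely the coupling term $\tfrac1\veps(e''(\rho)\nabla_x\rho-e''(\brho)\nabla_x\brho)(m-\barm)$: it involves the full momentum $m=\rho u$, which --- unlike $\barm$ --- is not a priori of order $\veps$, so it is genuinely of size $O(\veps^{-1})$ and cannot be controlled by the relative energy alone; it must be balanced against the $O(\veps^{-2})$ dissipation supplied by friction. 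This is exactly why, in contrast with the proof of Theorem 3.3, the friction term has to be retained rather than discarded; a secondary but necessary point is that obtaining the sharp $\veps^4$ rate (rather than $\veps^2$) forces one to spend part of that dissipation on the error $\bar E$, via the $\veps^2$--weighted Young estimate above.
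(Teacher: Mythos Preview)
Your proposal is correct and follows essentially the same route as the paper, which gives almost no details beyond ``Following the arguments in the proof of Theorem~3.3 and using the orders in \eqref{order1} and \eqref{order2}.'' You have correctly fleshed out the two points that go beyond a literal repetition of Theorem~3.3: (i) every occurrence of $\barm/\brho$, its derivatives, and $\div\barm$ in \eqref{eq:RelEnKorlf2} is $O(\veps)$, so the explicit $\tfrac1\veps$ prefactors cancel; and (ii) the residual subterms of the $e''$--coupling that carry the full momentum $\rho(u-\baru)$, together with the $\bar E$ error, are genuinely of order $\veps^{-1}$ (respectively $\veps$) and must be balanced against the friction dissipation $-\tfrac{2}{\veps^2}\!\int\!\!\int K$ via $\veps$-weighted Young inequalities; the weights can always be adjusted so the total $K/\veps^2$ contribution stays strictly below $2/\veps^2$.
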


\section{Vanishing Capillarity Limit}\label{sec:vcl}
In this section we study the vanishing capillarity limit in the case of  convex energy densities $h.$
For convenience we use a $\gamma$-law $h(\rho) = \rho^\gamma$ with $\gamma>1.$
We consider two different settings for the capillarity. The first setting is constant capillarity
\begin{equation}\label{set1}\tag{Set1}
 \kappa(\rho)=C_\kappa>0.
\end{equation}
The second setting is:
\begin{equation}\label{set2}\tag{Set2}
\begin{split}
 \kappa(\rho) > 0, \ \kappa(\rho) \kappa''(\rho) - 2(\kappa'(\rho))^2\geq 0, \quad 
  \rho^2 \kappa(\rho) \lesssim h(\rho) + \rho, \quad |\rho \kappa'(\rho)| \lesssim \kappa(\rho) ,
\end{split}
\end{equation}
for all $\rho > 0$. Recall that the notation  $a \lesssim b$ for two positive quantities $a,b$ indicates that there is a constant $C>0$ such that $a \leq C b.$

\begin{remark}[Setting 2]
\begin{enumerate}
 \item  The conditions on $\kappa $ in \eqref{set2} ensure  that the Hessian of 
 $F(\rho,q) := \tfrac{\kappa(\rho)}{2} |q|^2$ given by
\[\nabla_{(\rho,q)}^2 F (\rho,q) = \begin{pmatrix}
 \frac12 \kappa''(\rho)|q|^2 & \kappa'(\rho)q \\
\kappa'(\rho)q^T & \kappa(\rho)\mathbb{I}
\end{pmatrix}\]
is positive semi-definite. This, in particular, implies
\[ F(\rho,q|\brho , \bar q) \geq 0 \quad \forall \ \rho,\brho \geq 0,\, q , \bar q \in \mathbb{R}^d.\]
\item The assumptions of setting 2 cover, in particular, the quantum hydrodynamics case $\kappa(\rho)=\rho^{-1}.$
\end{enumerate}
\end{remark}

We will fix from now on some $\kappa$ satisfying \eqref{set1} or \eqref{set2}
 and investigate the limit for $\varepsilon \rightarrow 0$ of dissipative or conservative weak solutions $(\rho^\veps,\rho^\veps u^\veps)$ of
\begin{equation} \label{eq:EK-eps}
    \begin{split}
	\rho^\veps_{t} +\dx (\rho^\veps u^\veps) &=0  \\
    (\rho^\veps u^\veps)_{t} + \dx (\rho^\veps u^\veps \otimes u^\veps)
	&=  - \rho \nabla_x \Big( h'(\rho^\veps) + \frac{\veps \kappa'(\rho^\veps)}{2} |\nabla_x \rho^\veps|^2  -\dx (\veps \kappa(\rho^\veps) \nabla_x \rho^\veps)  \Big )\, .  \\
    \end{split}
\end{equation}

We will show that $(\rho^\veps,\rho^\veps u^\veps)$ converges, for $\varepsilon \rightarrow 0$, to a solution $(\rho,\rho u)$ of 
\begin{equation} \label{eq:Euler}
    \begin{split}
	\rho_{t} +\dx (\rho u) &=0  \\
    (\rho u)_{t} + \dx (\rho u \otimes u)
	&=  - \rho \nabla_x \Big( h'(\rho)  \Big )\, ,  \\
    \end{split}
\end{equation}
on any time interval $[0,T]$ such that \eqref{eq:Euler} admits a solution  satisfying
\begin{equation}\label{Euler-reg}
  \begin{split}
  \rho &\in C^0([0,T],C^3(\rT^d,\mathbb{R}_+)) \cap C^{1}((0,T),C^1(\rT^d,\mathbb{R}_+))\, ,\\
   u &\in C^0([0,T], C^2(\rT^d,\mathbb{R}^d)) \cap C^1((0,T),C^0(\rT^d,\mathbb{R}^d))\, .
 \end{split}
\end{equation}

\begin{remark}[Regularity]
Note that it is not sufficient for  $(\rho,\rho u)$ to be a classical solution of \eqref{eq:Euler} but it needs to have the regularity of classical solutions to \eqref{eq:EK}
and, in addition, second (spatial) derivatives of the velocity need to exist.
\end{remark}

The convergence results we obtain in both settings are similar, and we use relative energy in both cases. However, there is a difference
in strategy in the two proofs. In setting 1, we use the Euler-Korteweg relative energy, while in setting 2 we use the
relative energy for the limiting Euler system adapted to account for the Euler-Korteweg energy of $(\rho^\veps, u^\veps)$.

\begin{theorem}\label{thrm:set1}
Let $\kappa$ satisfying \eqref{set1} be given.
 Let $(\rho,u)$ be a solution of \eqref{eq:Euler} with initial data $(\rho_0,u_0)$ satisfying \eqref{Euler-reg}.
 Let $\{(\rho^\veps,u^\veps)\}_{\veps>0}$ be a family of (conservative or dissipative) weak solutions to \eqref{eq:EK-eps} parametrized in $\veps$ 
 having the same initial data $(\rho_0,u_0).$
 Then, there exists a constant $C>0$ depending on $T$ and  $(\rho,u)$ such that 
 \begin{equation}
  \int_{\rT^d} h(\rho^\veps(t,\cdot)|\rho(t,\cdot)) + \frac{\rho^\veps(t,\cdot)}{2} \big|u^\veps (t,\cdot)- u(t,\cdot)\big|^2 \, dx \leq  C \veps^2 
 \end{equation}
for almost all $t \in [0,T].$
\end{theorem}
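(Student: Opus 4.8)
The plan is to use the Euler--Korteweg relative energy inequality \eqref{eq:RelEnKorGenFinalweak}, applied with the weak solution $(\rho^\veps, m^\veps)$ of \eqref{eq:EK-eps} (note that \eqref{eq:EK-eps} is precisely \eqref{eq:EK} with capillarity $\veps\kappa$ in place of $\kappa$) and with the smooth Euler solution $(\rho, m=\rho u)$ of \eqref{eq:Euler} taking the role of the ``strong solution''. The subtlety is that $(\rho,\rho u)$ does not solve \eqref{eq:EK-eps} exactly but only up to a capillarity defect of size $O(\veps)$; in the balance law for $m=\rho u$ the Korteweg stress produces an extra forcing $\veps\, \dx S_\kappa(\rho)$ with $S_\kappa(\rho) = -\big(\tfrac{\rho\kappa'(\rho)+\kappa(\rho)}{2}|\nabla_x\rho|^2 - \dx(\rho\kappa(\rho)\nabla_x\rho)\big)\mathbb I - \kappa(\rho)\nabla_x\rho\otimes\nabla_x\rho$, and since $\rho$ satisfies \eqref{Euler-reg} this defect is smooth and uniformly bounded. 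First I would therefore rederive \eqref{eq:RelEnKorGenFinalweak} in this slightly inhomogeneous setting, obtaining on the right-hand side the usual terms (now carrying a factor $\veps$ in front of $s$, $r$, $H$ since the capillarity is $\veps\kappa$) plus an additional error term of the form $\veps \int\!\!\int \nabla_x(\tfrac{m^\veps}{\rho^\veps}-u) : S_\kappa(\rho)\,dxds$ coming from the capillarity defect in the momentum equation.

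The second step is to bound all the capillary relative quantities $F(\rho^\veps,\nabla_x\rho^\veps\,|\,\rho,\nabla_x\rho)$, $s(\cdot|\cdot)$, $r(\cdot|\cdot)$, $H(\cdot|\cdot)$ appearing in the inequality. These are now multiplied by $\veps$, and the energy bound \textbf{(H)} together with \eqref{hypCauchyK2} controls $\veps\kappa(\rho^\veps)|\nabla_x\rho^\veps|^2$ uniformly in $L^\infty_t L^1_x$; this is exactly where setting 1 (constant $\kappa$) or the structural hypotheses of setting 2 are used to guarantee $F(\rho^\veps,\nabla_x\rho^\veps|\rho,\nabla_x\rho)\ge 0$ and to dominate the off-diagonal terms. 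Using the smoothness and the lower bound $\rho\ge\delta$ of the Euler solution, the terms $\veps\,\nabla_x u : H(\cdot|\cdot)$ and $\veps\,\nabla_x\dx u\cdot r(\cdot|\cdot)$ are estimated, via Young's inequality and the bounds $\rho^2\kappa(\rho^\veps)\lesssim h(\rho^\veps)+\rho^\veps$ etc., by $C\big(\veps\int F(\cdots) + \veps\int h(\rho^\veps|\rho) + \veps^2 K_2 + \ldots\big)$; the purely ``Euler'' terms $\rho^\veps(u^\veps-u)\otimes(u^\veps-u):\nabla_x u$ and $\dx u\,\big(p(\rho^\veps|\rho)\big)$ are controlled by $C\big(\int K(\rho^\veps,m^\veps|\rho,\rho u) + \int h(\rho^\veps|\rho)\big)$ as in the classical weak--strong argument (using $p(\rho|\bar\rho)\le C\,h(\rho|\bar\rho)$ for a $\gamma$-law together with the lower/upper bounds on $\bar\rho$). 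Finally the capillarity-defect error is handled by writing $\veps\nabla_x(u^\veps-u):S_\kappa(\rho)$, using Young to absorb the $\nabla_x u^\veps$ contribution partly into the dissipation and producing $C\veps^2$ from the smooth, bounded factor $S_\kappa(\rho)$; here one may integrate by parts in $x$ to move derivatives off $u^\veps-u$ onto the smooth tensor $S_\kappa(\rho)$, at the cost only of $\rho$-derivatives that are controlled by \eqref{Euler-reg}.

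Collecting everything, and writing $\eta^\veps(t) := \int_{\rT^d} h(\rho^\veps|\rho) + \tfrac{\rho^\veps}{2}|u^\veps-u|^2\,dx$, one arrives at
\begin{equation}
\eta^\veps(t) \;\le\; \eta^\veps(0) \;+\; C\int_0^t \eta^\veps(s)\,ds \;+\; C\veps^2 \, ,
\end{equation}
for almost every $t\in[0,T]$, where $C$ depends only on $T$ and on $(\rho,u)$ through \eqref{Euler-reg} and $\delta$. Since the initial data coincide, $\eta^\veps(0)=0$, so Gr\"onwall's lemma gives $\eta^\veps(t)\le C\veps^2 e^{CT}$, which is the claim. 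The main obstacle I anticipate is the bookkeeping in step two: one must ensure that every term carrying the capillary relative quantities genuinely retains the factor $\veps$ (so that its only residue after using \eqref{hypCauchyK2} is $O(\veps)$ or, after Young, $O(\veps^2)$), and that the cross terms mixing a capillary factor with a kinetic or pressure factor are split so that the capillary part absorbs the $\veps$ and the remaining part is controlled by $\eta^\veps$; in setting 2 this requires careful use of the three structural inequalities on $\kappa$, whereas in setting 1 it is comparatively routine. A minor point is the justification that \eqref{eq:RelEnKorGenFinalweak} continues to hold with the $O(\veps)$ forcing, which follows by repeating the derivation of \cite[Theorem 3.2]{GLT} and tracking the extra term, exactly as \eqref{eq:RelEnKorlf} adds the term with $\bar E$ in Section \ref{sec:lf}.
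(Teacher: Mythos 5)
Your overall strategy is the paper's: apply the Euler--Korteweg relative energy \eqref{eq:RelEnKorGenFinalweak} to $(\rho^\veps,m^\veps)$ against the smooth Euler solution regarded as a solution of \eqref{eq:EK-eps} up to an $O(\veps)$ capillarity defect, then use Young's inequality and Gronwall. However, two of your steps, as described, would not deliver the $\veps^2$ rate. First, the capillary relative quantities: you propose to control $\veps s(\cdot|\cdot)$, $\veps r(\cdot|\cdot)$, $\veps H(\cdot|\cdot)$ using the a priori energy bound \eqref{hypCauchyK2}. For the $\veps$-system that bound only says $\veps|\nabla_x\rho^\veps|^2$ is $O(1)$ in $L^\infty_t L^1_x$, so this route produces an $O(1)$ remainder and destroys the rate. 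These terms are each bounded by $C\,\veps|\nabla_x\rho^\veps-\nabla_x\rho|^2$, i.e.\ by a constant times the \emph{relative} capillary energy, and must be absorbed by Gronwall; hence the Gronwall functional has to be $\Phi_\veps=\eta^\veps+\int_{\rT^d}\veps\tfrac{C_\kappa}{2}|\nabla_x\rho^\veps-\nabla_x\rho|^2\,dx$, not your $\eta^\veps$. Since the initial data coincide, $\Phi_\veps(0)=0$ and the stated conclusion follows from $\eta^\veps\le\Phi_\veps$ --- but your final inequality, with the capillary part dropped from the functional, does not close.

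Second, the defect term. The forcing enters the relative energy balance in the density-weighted form $-\bar E\cdot\tfrac{\rho^\veps}{\rho}\big(u^\veps-u\big)$ (compare the $\bar E$ term in \eqref{eq:RelEnKorlf}), which for constant $\kappa$ is $\veps C_\kappa\,\nabla_x\Delta_x\rho\cdot\rho^\veps(u^\veps-u)$; Young's inequality then yields $\tfrac12\rho^\veps|u^\veps-u|^2+C\veps^2\rho^\veps$, and conservation of mass finishes the estimate. Your form $\veps\,\nabla_x\big(\tfrac{m^\veps}{\rho^\veps}-u\big):S_\kappa(\rho)$ is not what the computation produces, and both of your proposed treatments of it fail: there is no friction or dissipation term in this problem into which a $\nabla_x u^\veps$ contribution could be absorbed (and $\nabla_x u^\veps$ is not controlled by the energy of a weak solution), while integrating by parts leaves $\int|u^\veps-u|$ \emph{without} the density weight, which the relative energy does not control where $\rho^\veps$ is small. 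With the density-weighted form of the error and the augmented functional $\Phi_\veps$, your argument coincides with the paper's; the remarks about Setting 2 are not needed for this theorem.
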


\begin{proof}
 Since the initial data coincide and $\kappa (\rho) =C_\kappa$ equation \eqref{eq:RelEnKorGenFinalweak} reads, 
\begin{equation}\label{Econv}
\begin{split}
& \int_{\rT^d} \left. \Big ( \frac{1}{2} \rho^\veps \Big | \frac{m^\veps}{\rho^\veps} -  \frac{m}{\rho}\Big | ^2 +  h(\rho^\veps  | \,  \rho \right. )+ \veps \frac{C_\kappa}{2} | \nabla_x \rho^\veps - \nabla_x \rho|^2\Big )dx \Big |_t\\
&\leq
   - \int \!\!\int_{[0,t)\times\rT^d} \left [\rho^\veps \left( \frac{m^\veps}{\rho^\veps}-   \frac{ m}{\rho}\right )\otimes \left( \frac{m^\veps}{\rho^\veps}-   \frac{ m}{\rho}\right ) :\nabla_x  \left(    \frac{m}{\rho}\right )\right]dsdx\\
 &\  - \int \!\!\int_{[0,t)\times\rT^d}\left[ \dx \left (\frac{m}{\rho}\right ) \Big(\veps s(\rho^\veps, \nabla_x\rho^\veps\left | \rho, \nabla_x \rho \right. ) + p(\rho^\veps|\rho) \Big)\right]dsdx
\\
 &\ 
 - \int \!\!\int_{[0,t)\times\rT^d} \left [ \nabla_x \left (\frac{ m}{\rho} \right) : \veps H(\rho^\veps, \nabla_x\rho^\veps\left | \rho, \nabla_x \rho \right. ) 
 + \nabla_x \dx \left (\frac{m}{\rho}\right ) \cdot \veps r(\rho^\veps, \nabla_x\rho^\veps\left | \rho, \nabla_x\rho \right. )\right]dsdx
 \\
&\   +  \int \!\!\int_{[0,t)\times\rT^d} \varepsilon C_\kappa  \nabla \Delta \rho \cdot   \rho^\varepsilon \left (  \frac{m^\eps}{\rho^\eps}  - \frac{m}{\rho}  \right ) \, ds dx
\end{split}
\end{equation}
with
\begin{align*}
 H(\rho^\veps, \nabla_x^\veps\rho\left | \rho, \nabla_x\rho \right. )  
 &= C_\kappa(\nabla_x\rho^\veps-\nabla_x\rho)\otimes(\nabla_x\rho^\veps-\nabla_x\rho)\, ;\\
  s(\rho^\veps, \nabla_x\rho^\veps\left | \rho, \nabla_x \rho \right. ) 
   &= 
   C_\kappa|\nabla_x\rho^\veps - \nabla_x\rho|^2 
   \,
     ;\\
  r(\rho^\veps, \nabla_x\rho^\veps\left |  \rho, \nabla_x\rho \right. )  &= C_\kappa \big(\rho^\veps - \rho \big) (\nabla_x \rho^\veps -\nabla_x\rho)\, ;
  \end{align*}
  confer \cite[Sec 3.1]{GLT}.
Thus, there exists a constant $C>0$ such that
\begin{equation}
 \begin{split}
\big| H(\rho^\veps, \nabla_x^\veps\rho\left | \rho, \nabla_x\rho \right. )   \big| &\leq  C| \nabla_x \rho^\veps - \nabla_x \rho|^2 ;\\
\big| s(\rho^\veps, \nabla_x^\veps\rho\left | \rho, \nabla_x\rho \right. )   \big| &\leq C | \nabla_x \rho^\veps - \nabla_x \rho|^2 ;\\
\big| r(\rho^\veps, \nabla_x^\veps\rho\left | \rho, \nabla_x\rho \right. )   \big| &\leq C  | \nabla_x \rho^\veps - \nabla_x \rho|^2;
 \end{split}
\end{equation}
where we have used Poincar{\'e}'s  inequality in the third estimate.

The last term in \eqref{Econv} is estimated by
$$
\Big | \int \!\!\int_{[0,t)\times\rT^d} \varepsilon C_\kappa  \nabla \Delta \rho \cdot   \rho^\varepsilon \left (  \frac{m^\eps}{\rho^\eps}  - \frac{m}{\rho}  \right ) \, ds dx \Big |
\le  \int \!\!\int_{[0,t)\times\rT^d}   \frac{1}{2} \rho^\eps | u^\eps - u|^2 ds dx + \eps^2 C t \int_{\rT^d} \rho^\eps dx
$$
where $\int \rho^\eps dx \le C$ is uniformly bounded by the conservation of mass. 
Moreover, $p(\rho)=(\gamma -1 ) h(\rho)$ and thus the function
\begin{equation}
 \Phi_\veps(t):= \int_{\rT^d} \left. \Big ( \frac{1}{2} \rho^\veps \Big | \frac{m^\veps}{\rho^\veps} -  \frac{m}{\rho}\Big | ^2 +  h(\rho^\veps  | \,  \rho \right. )+ \veps \frac{C_\kappa}{2}| \nabla_x \rho^\veps - \nabla_x \rho|^2 \Big )dx \Big |_t
\end{equation}
satisfies 
\[ 
\Phi_\veps(t) \leq \int_0^t C \Phi_\veps(s) \, ds +  \veps^2  C t \, , \quad \Phi_\veps (0)=0  
\]
for some constant $C>0$ independent of $\veps.$
Hence,
\[ 
\Phi_\veps(t) \leq \veps^2 C \exp( C t)
\]
which completes the proof of the theorem.
\end{proof}

\begin{remark}[Initial data]
It is straightforward to see that a result analogous to Theorem \ref{thrm:set1} holds 
in case $(\rho^\veps,u^\veps)$ has initial data $(\rho_0^\veps, u^\veps_0)$ such that
 \begin{equation}
 \| \rho_0^\veps\|_{H^1(\rT^d)} = \mathcal{O}(1),\ \int_{\rT^d} h(\rho^\veps_0|\rho_0) + \rho^\veps_0 \big|u^\veps_0- u_0\big|^2 \, dx = \mathcal{O}(\veps^2)\quad
 \text{ 
 and
}\quad
 \int_{\rT^d} \rho^\veps_0 - \rho \, dx =0.
 \end{equation}
 \end{remark}

\begin{theorem}\label{thrm:set2}
Let $\kappa$ satisfying \eqref{set2} be given.
 Let $(\rho,u)$ be a solution of \eqref{eq:Euler} with initial data $(\rho_0,u_0)$ satisfying \eqref{Euler-reg}.
 Let $\{(\rho^\veps,u^\veps)\}_{\veps>0}$ be a family of (conservative or dissipative) weak solutions to \eqref{eq:EK-eps} parametrized in $\veps$ 
 having the same initial data $(\rho_0,u_0).$
 Then, there exists a constant $C>0$ depending on $T$ and  $(\rho,u)$ such that 
 \begin{equation}
  \int_{\rT^d} h(\rho^\veps(t,\cdot)|\rho(t,\cdot)) + \frac{\rho^\veps(t,\cdot)}{2} \big|u^\veps (t,\cdot)- u(t,\cdot)\big|^2 \, dx \leq C \veps
 \end{equation}
for almost all $t \in [0,T].$
\end{theorem}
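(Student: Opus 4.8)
The plan is to run a relative energy argument between the weak solution $(\rho^\veps,m^\veps)$, $m^\veps=\rho^\veps u^\veps$, of \eqref{eq:EK-eps} and the smooth solution $(\rho,m)$, $m=\rho u$, of the limiting Euler system \eqref{eq:Euler}, using the relative entropy of \eqref{eq:Euler} enlarged by the capillary part of the Euler--Korteweg energy of $(\rho^\veps,u^\veps)$. Concretely I would work with
\[
\Phi_\veps(t):=\int_{\rT^d}\Big(\tfrac12\rho^\veps\big|u^\veps-u\big|^2+h(\rho^\veps|\rho)+\veps\,\tfrac{\kappa(\rho^\veps)}{2}|\nabla_x\rho^\veps|^2\Big)\,dx\,\Big|_t\,,
\]
which is nonnegative and dominates $\int_{\rT^d}h(\rho^\veps|\rho)+\tfrac{\rho^\veps}{2}|u^\veps-u|^2\,dx$, so it suffices to prove $\Phi_\veps(t)\le C\veps$ for a.e.\ $t\in[0,T]$. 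Since $(\rho^\veps,u^\veps)$ and $(\rho,u)$ share the same initial data $(\rho_0,u_0)$ with $\rho_0\in H^1(\rT^d)$, the first two terms of $\Phi_\veps$ vanish at $t=0$ and $\Phi_\veps(0)=\veps\int_{\rT^d}\tfrac{\kappa(\rho_0)}{2}|\nabla_x\rho_0|^2\,dx=\mathcal O(\veps)$.

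The first step is to produce the relative energy inequality. One views $(\rho^\veps,m^\veps)$ as a weak solution of \eqref{eq:Euler} whose momentum flux carries the additional term $-\veps\,S_\kappa(\rho^\veps)$, where
\[
S_\kappa(\rho):=\Big[-\tfrac{\rho\kappa'(\rho)+\kappa(\rho)}{2}|\nabla_x\rho|^2+\dx\big(\rho\kappa(\rho)\nabla_x\rho\big)\Big]\mathbb I-\kappa(\rho)\nabla_x\rho\otimes\nabla_x\rho
\]
is the purely capillary part of the Korteweg stress \eqref{stress}. Combining the Euler--Korteweg energy inequality of $(\rho^\veps,m^\veps)$ (with $\kappa$ replaced by $\veps\kappa$) with the exact energy identity of the strong Euler solution $(\rho,m)$ and carrying out the Dafermos--DiPerna relative entropy computation, exactly as in \cite{GLT} (no forcing of the continuity equation arises), gives, for a.e.\ $t\in[0,T)$,
\begin{align*}
\Phi_\veps(t)\le{}&\Phi_\veps(0)-\iint_{[0,t)\times\rT^d}\Big[\rho^\veps(u^\veps-u)\otimes(u^\veps-u):\nabla_x u+p(\rho^\veps|\rho)\,\dx u\Big]dx\,ds\\
&\pm\veps\iint_{[0,t)\times\rT^d}S_\kappa(\rho^\veps):\nabla_x u\,dx\,ds\,.
\end{align*}
The first two terms are controlled in the usual way: since $\rho,u,\nabla_x u$ are bounded, $\rho^\veps(u^\veps-u)\otimes(u^\veps-u):\nabla_x u\le C\rho^\veps|u^\veps-u|^2\le C\Phi_\veps$, while for the $\gamma$-law $p=(\gamma-1)h$, so $p(\rho^\veps|\rho)\,\dx u\le C\,h(\rho^\veps|\rho)\le C\Phi_\veps$.

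The crux is the capillary term, and this is where the structural conditions \eqref{set2} enter. Using $|\rho\kappa'(\rho)|\lesssim\kappa(\rho)$, both the isotropic $|\nabla_x\rho^\veps|^2$-part of $S_\kappa(\rho^\veps)$ and the tensor part $\kappa(\rho^\veps)\nabla_x\rho^\veps\otimes\nabla_x\rho^\veps$ are pointwise dominated by $C\kappa(\rho^\veps)|\nabla_x\rho^\veps|^2$; for the term $\dx(\rho^\veps\kappa(\rho^\veps)\nabla_x\rho^\veps)\,\dx u$ one integrates by parts in $x$ to transfer the derivative onto $u$ and then, using $\rho^2\kappa(\rho)\lesssim h(\rho)+\rho$, bounds $\rho^\veps\kappa(\rho^\veps)|\nabla_x\rho^\veps|\le\tfrac12(\rho^\veps)^2\kappa(\rho^\veps)+\tfrac12\kappa(\rho^\veps)|\nabla_x\rho^\veps|^2\lesssim h(\rho^\veps)+\rho^\veps+\kappa(\rho^\veps)|\nabla_x\rho^\veps|^2$. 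Since $\veps\int_{\rT^d}\kappa(\rho^\veps)|\nabla_x\rho^\veps|^2\,dx\le 2\Phi_\veps$ and $\int_{\rT^d}(h(\rho^\veps)+\rho^\veps)\,dx$ is bounded uniformly in $t$ and $\veps$ by hypothesis \textbf{(H)}, this yields
\[
\Big|\veps\iint_{[0,t)\times\rT^d}S_\kappa(\rho^\veps):\nabla_x u\,dx\,ds\Big|\le C\int_0^t\Phi_\veps(s)\,ds+C\veps t\,.
\]
(Equivalently one could keep the relative capillary energy $\veps F(\rho^\veps,\nabla_x\rho^\veps|\rho,\nabla_x\rho)$ in $\Phi_\veps$ — nonnegative precisely by the Hessian condition of \eqref{set2} — and run the Euler--Korteweg relative energy \eqref{eq:RelEnKorGenFinalweak} as in \eqref{Econv}; the extra linear-in-$(\rho^\veps-\rho)$ terms are absorbed after integration by parts using $\int_{\rT^d}|\rho^\veps-\rho|\,dx\le C\,(1+\int_{\rT^d}h(\rho^\veps|\rho)\,dx)$, cf.\ \cite[Lemma 2.4]{LT13}.) Collecting everything gives $\Phi_\veps(t)\le\Phi_\veps(0)+C\int_0^t\Phi_\veps(s)\,ds+C\veps t$ with $\Phi_\veps(0)=\mathcal O(\veps)$, and Gronwall's lemma gives $\Phi_\veps(t)\le C\veps\,e^{Ct}\le C\veps$ on $[0,T]$, proving the theorem. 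The delicate point, and the main obstacle, is exactly this estimate of the capillary stress: one must arrange that every capillary contribution is absorbed either into $C\Phi_\veps$ or into an $\mathcal O(\veps)$ remainder, never leaving an uncontrolled $\veps\int|\nabla_x\rho^\veps|^2$. It is also why the rate is only $\veps$, not the $\veps^2$ of Theorem \ref{thrm:set1}: for non-constant $\kappa$ the capillary stress is not expressible through the relative capillary energy alone, so the reference capillary energy $\veps\int_{\rT^d}F(\rho,\nabla_x\rho)\,dx=\mathcal O(\veps)$ is an irreducible source.
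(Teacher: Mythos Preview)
Your proposal is correct and follows essentially the same route as the paper: you use the Euler relative entropy augmented by the full capillary energy $\veps\tfrac{\kappa(\rho^\veps)}{2}|\nabla_x\rho^\veps|^2$ of the weak solution, bound the kinetic and pressure terms in the standard way, and handle the capillary stress exactly as the paper does---using $|\rho\kappa'(\rho)|\lesssim\kappa(\rho)$ for the $|\nabla_x\rho^\veps|^2$ and tensor pieces, integrating by parts on $\dx(\rho^\veps\kappa(\rho^\veps)\nabla_x\rho^\veps)\,\dx u$, splitting via Young's inequality, and invoking $\rho^2\kappa(\rho)\lesssim h(\rho)+\rho$ together with the uniform mass/energy bounds---before closing with Gronwall. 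Your identification of $\Phi_\veps(0)=\mathcal O(\veps)$ and your remark on why the rate is only $\veps$ are also in agreement with the paper.
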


\begin{proof}
 For any pair of density $\rho$ and momentum $m$ we denote
 \begin{equation}
 \begin{split}
  \eta_\veps (\rho,m)&:= h(\rho) + \frac{1}{2} \frac{|m|^2}{\rho} + \veps \frac{\kappa(\rho)}{2} |\nabla_x \rho|^2;\\
  f(\rho,m)&:= \begin{pmatrix}
                     m \\ \frac{m \otimes m }{\rho}
                     +p(\rho)  
                    \end{pmatrix};
                    \\
 -S_\veps[\rho]&:=\veps \Big[ \frac{\rho \kappa'(\rho) + \kappa(\rho)}{2} |\nabla_x \rho|^2 - \dx ( \rho \kappa(\rho) \nabla_x \rho) \Big ]  \mathbb{I} + \veps \kappa(\rho)\nabla_x \rho \otimes \nabla_x \rho.
  \end{split}
 \end{equation}

Using this notation we can rewrite \eqref{eq:EK-eps}
as
\begin{equation}
 \del_t \begin{pmatrix}
         \rho^\veps \\m^\veps 
        \end{pmatrix} + \dx f(\rho^\eps,m^\eps) - \dx \begin{pmatrix} 0 \\ S_\veps[\rho^\veps]\end{pmatrix} = \begin{pmatrix}
        0\\0
        \end{pmatrix}
\end{equation}
and \eqref{eq:Euler} as
\begin{equation}\label{rew:Euler}
 \del_t \begin{pmatrix}
         \rho \\m
        \end{pmatrix} + \dx f(\rho,m)  = \begin{pmatrix}
        0\\0
        \end{pmatrix}.
\end{equation}
We will monitor the temporal evolution of
\begin{equation}
\begin{split}
 \Phi_\veps(t)&:= \int_{\rT^d} \eta_\veps(\rho^\veps, m^\veps) - \eta_0(\rho,m) - \operatorname{D} \eta_0 (\rho,m) 
 \begin{pmatrix}
          \rho^\veps -\rho \\ m^\veps   - m                                                                                                           
   \end{pmatrix}
   \, dx\Big|_t\\
   &=\int_{\rT^d} \left. \Big ( \frac{1}{2} \rho^\veps \Big | \frac{m^\veps}{\rho^\veps} -  \frac{m}{\rho}\Big | ^2 +  h(\rho^\veps  | \,  \rho \right. )+ 
   \veps \frac{\kappa (\rho^\eps) }{2}   | \nabla_x \rho^\veps |^2 \Big )dx \Big |_t.
   \end{split}
\end{equation}
Note that
\[ \operatorname{D} \eta_0 (\rho,m)  = \begin{pmatrix} 
                                        h'(\rho) - \frac{1}{2} \frac{|m|^2}{\rho^2}\\ \frac{m}{\rho}
                                       \end{pmatrix}.\]
A straightforward computation shows
\begin{multline}\label{phiest0}
 \Phi_\veps(t) \leq   \iint_{[0,t) \times \rT^d } -\Big( \operatorname{D} \eta_0 (\rho,m)\Big)_t \begin{pmatrix}
      \rho^\veps - \rho\\ m^\veps - m \end{pmatrix}\\
   - 
  \nabla_x \Big( \operatorname{D} \eta_0 (\rho,m)\Big)
   \Big( f(\rho^\veps,m^\veps ) - \begin{pmatrix}
              0 \\ S_\veps[\rho^\veps]
    \end{pmatrix}- f(\rho,m) \Big)ds dx.
\end{multline}
Since $(\rho,u)$ is a strong solution of \eqref{rew:Euler} we infer
\begin{multline}\label{phiest1}
 \Phi_\veps(t) \leq \iint_{[0,t)\times \rT^d} \nabla_x \Big( \operatorname{D} \eta_0 (\rho,m)\Big): \Big[ f(\rho^\veps,m^\veps) - f(\rho,m) - \operatorname{D} f(\rho,m)  \begin{pmatrix}
          \rho^\veps - \rho \\ m^\veps    -m                                                                                                         
   \end{pmatrix}\Big]
 \, ds dx\\
   + \iint_{[0,t)\times \rT^d} \nabla_x \Big( \operatorname{D} \eta_0 (\rho,m)\Big) :
   \begin{pmatrix} 0 \\ S_\veps[\rho^\veps]\end{pmatrix}\, ds dx.
\end{multline}

Using the definitions of $f$ and $S_\veps,$ equation \eqref{phiest1} is equivalent to
\begin{equation}\label{phiest2}
 \begin{split}
   \Phi_\veps(t) \leq& 
    - \int \!\!\int_{[0,t)\times\rT^d} \left [\rho^\veps \left( \frac{m^\veps}{\rho^\veps}-   \frac{ m}{\rho}\right )\otimes \left( \frac{m^\veps}{\rho^\veps}-   \frac{ m}{\rho}\right ) :\nabla_x  \left(    \frac{m}{\rho}\right )\right]dsdx\\
 &\  - \int \!\!\int_{[0,t)\times\rT^d}\left[ \dx \left (\frac{m}{\rho}\right )  p(\rho^\veps|\rho)\right]dsdx
\\
&\ - \iint_{[0,t)\times\rT^d} \dx\left (\frac{m}{\rho}\right )\veps \Big[  \frac{\rho^\veps \kappa'(\rho^\veps)+ \kappa(\rho^\veps)}{2}|\nabla_x \rho^\veps|^2 
       - \dx ( \rho^\veps \kappa(\rho^\veps) \nabla_x \rho^\veps) \Big ]\, ds dx\\
&\ - \int \!\!\int_{[0,t)\times\rT^d} \veps \kappa(\rho^\veps) \nabla_x \left (\frac{m}{\rho}\right ): \nabla_x \rho^\veps \otimes \nabla_x \rho^\veps\, ds dx.
 \end{split}
\end{equation}
Due to the hypothesis $|\rho^\veps \kappa'(\rho^\veps)| \lesssim \kappa(\rho^\veps)$ there is a constant $C>0$ such that
\[ \int_{\rT^d} \veps \left|   \frac{\rho^\veps \kappa'(\rho^\veps)+ \kappa(\rho^\veps)}{2}|\nabla_x \rho^\veps|^2   \right| + \left| \veps \kappa(\rho^\veps) \nabla_x \rho^\veps \otimes \nabla_x \rho^\veps \right|\, dx \leq C \Phi_\veps \, .
\]
We infer from \eqref{phiest2} that
\begin{equation*}
 \begin{split}
   \Phi_\veps(t) \leq& 
    C \int_0^t \Phi_\veps(s) ds  + \int \!\!\int_{[0,t)\times\rT^d} \dx\left (\frac{m}{\rho}\right )\veps  \dx ( \rho^\veps \kappa(\rho^\veps) \nabla_x \rho^\veps) \, ds dx\\
 = &   C \int_0^t  \Phi_\veps(s) ds  - \int \!\!\int_{[0,t)\times\rT^d}\nabla_x \left( \dx\left (\frac{m}{\rho}\right )\right)\veps  \rho^\veps \kappa(\rho^\veps) \nabla_x \rho^\veps \, ds dx
 \end{split}
 \end{equation*}
 and obtain that
 \begin{equation}
 \label{phiest3}
 \Phi_\veps(t)
  \leq     C \int_0^t  \Phi_\veps(s) ds  +  \veps  \int \!\!\int_{[0,t)\times\rT^d}\left|\nabla_x \left( \dx\left (\frac{m}{\rho}\right )\right)\right| (\rho^\veps)^2 \kappa(\rho^\veps)\, ds dx.
\end{equation}
Due to $(\rho^\veps)^2 \kappa(\rho^\veps) \lesssim h(\rho^\veps) + \rho^\veps$ equation \eqref{phiest3} implies
\begin{equation}\label{phiest4}
 \begin{split}
   \Phi_\veps(t) \leq  C \int_0^t \Phi_\veps(s) ds  +  \veps C t \left[\int_{\rT^d} h(\rho_0) + \frac{1}{2}\frac{|m_0|^2}{\rho_0} + \frac{\kappa(\rho_0)}{2} |\nabla_x \rho_0|^2 \, dx
   + \int_{\rT^d}  \rho_0 \, dx\right].
 \end{split}
\end{equation}

In turn, applying Gronwall's inequality to \eqref{phiest4} implies
\[ \Phi_\veps(t) \leq \veps C \exp( C t) 
\]
and completes the proof.
\end{proof}


\appendix


\begin{thebibliography}{CDMR05}

%
%
\bibitem{BL03}
N.~Bedjaoui and P.~G. LeFloch.
\newblock Diffusive-dispersive traveling waves and kinetic relations. {IV}.
  {C}ompressible {E}uler equations.
\newblock {\em Chinese Ann. Math. Ser. B}, 24(1):17--34, 2003.

\bibitem{BDDJ05}
S. Benzoni-Gavage, R. Danchin, S. Descombes and D. Jamet,
\newblock Structure of Korteweg models and stability of diffuse interfaces.
\newblock {\em Interfaces and free boundaries} {\bf 7} (2005), 371-414.

\bibitem{BDD07}
S. Benzoni-Gavage, R. Danchin and S. Descombes.
\newblock On the well-posedness for the Euler-Korteweg model in several space dimensions
\newblock {\em  Indiana Univ. Math. J.} {\bf 56} (2007), 1499--1579.




\bibitem{B10}
Sylvie Benzoni-Gavage, 
\newblock  Propagating phase boundaries and capillary fluids. 
Lecture Notes.
\newblock 3\`eme cycle. Levico, 2010, pp.57. 

%
%
\bibitem{CH13}
Frederic Charve and Boris Haspot.
\newblock Existence of global strong solution and vanishing
capillarity-viscosity limit in one dimension for the Korteweg
system
\newblock {\em SIAM J. Math. Anal.} {\bf 45} No. 2, (2013), 469--494.

%
\bibitem{Daf79}
C.~M. Dafermos.
\newblock The second law of thermodynamics and stability.
\newblock {\em Arch. Rational Mech. Anal.} {\bf 70} (1979), 167--179.

\bibitem{Daf79b}
C.~M. Dafermos.
\newblock Stability of motions of thermoelastic fluids.
\newblock {\em J. Thermal Stresses} {\bf 2} (1979), 127--134.


%
%
\bibitem{Dip79}
Ronald~J. DiPerna.
\newblock Uniqueness of solutions to hyperbolic conservation laws.
\newblock {\em Indiana Univ. Math. J.} {\bf 28} (1979), 137--188.

\bibitem{DFM15}
D. Donatelli, E. Feireisl, and P. Marcati.
\newblock Well/ill posedness for the Euler--Korteweg--Poisson system and related problems.
\newblock {\em Comm. Partial Differential Equations} {\bf 40} (2015), 1314--1335.

\bibitem{DS85}
J.~E. Dunn and J.~Serrin.
\newblock On the thermomechanics of interstitial working.
\newblock {\em Arch. Rational Mech. Anal.} {\bf 88} (1985), 95--133.

\bibitem{GM97}
 Ingenuin Gasser and Peter~A.  Markowich. 
 \newblock Quantum hydrodynamics, Wigner transforms and the classical limit. 
 \newblock {\em Asymptot. Anal.} {\bf 14} (1997), 97Ð116.

\bibitem{Gie14}
J.~Giesselmann.
\newblock A relative entropy approach to convergence of a low order
  approximation to a nonlinear elasticity model with viscosity and capillarity.
\newblock {\em SIAM Journal on Mathematical Analysis}, {\bf 46} (2014), 3518--3539.

 \bibitem{GLT}
 Jan Giesselmann, Corrado Lattanzio and Athanasios E. Tzavaras.
 \newblock Relative energy for the Korteweg theory and related Hamiltonian flows in gas dynamics.
 \newblock Arxiv  http://arxiv.org/abs/1510.00801, 2015, and {\it Arch. Rational Mech. Anal.} (to appear).

 
 \bibitem{GL15}
Pierre Germain and Philippe LeFloch.
\newblock Finite Energy Method for Compressible Fluids: The Navier-Stokes-Korteweg Model.
\newblock {\em Commun. Pur. Appl. Math.}  {\bf 69} No 1 (2016), 3--61.

\bibitem{LT13}
Corrado Lattanzio and Athanasios~E. Tzavaras.
\newblock Relative entropy in diffusive relaxation.
\newblock {\em SIAM J. Math. Anal.} {\bf 45} (2013), 1563--1584.



\bibitem{LT16}
Corrado Lattanzio and Athanasios~E. Tzavaras.
\newblock From gas dynamics with large friction to gradient flows describing diffusion theories.
\newblock  ArXiv Preprint https://arxiv.org/abs/1601.05966, 2015






\end{thebibliography}
\def\cprime{$'$}

\end{document}